\documentclass[12pt,oneside,a4paper,reqno]{amsart}
\usepackage{amssymb,amsmath,amsthm,amsfonts}
\usepackage[T1]{fontenc}
\usepackage{tgtermes}

\usepackage[english]{babel}
\usepackage{enumerate,esint}

\usepackage[dvipsnames]{xcolor}
\definecolor{ourcolor}{RGB}{0,102,204}
\usepackage{hyperref}
 \hypersetup{
 colorlinks,
 citecolor=ourcolor,
 linkcolor=ourcolor,
 urlcolor=black}

\numberwithin{equation}{section}

\usepackage[left=2.5cm,right=2.5cm,top=2.5cm,bottom=2.5cm]{geometry}
 \setlength{\oddsidemargin}{0cm}
 \setlength{\textwidth}{14.8cm}
 \setlength{\footskip}{1cm}
 \setlength{\topmargin}{0in}
 \setlength{\headsep}{0.4in}
 \setlength{\textheight}{8.8in}
  \setlength{\parindent}{0.75cm}
  \setlength{\parskip}{6pt}
\parskip1mm

 \makeatletter
 \def\@seccntformat#1{\hspace*{0mm}%
  \protect\textup{\protect\@secnumfont
    \ifnum\pdfstrcmp{subsection}{#1}=0 \bfseries\fi
    \csname the#1\endcsname
    \protect\@secnumpunct
      }%
 }

\usepackage{mathtools}
\usepackage{graphicx}

\usepackage{epstopdf}
\DeclareGraphicsRule{.tif}{png}{.png}{`convert #1 `dirname #1`/`basename #1 .tif`.png}


\tolerance=10000
\allowdisplaybreaks

\theoremstyle{plain}
\newtheorem{thm}{Theorem}[section]
\newtheorem{lemma}[thm]{Lemma}

\newtheorem{cor}[thm]{Corollary}

\theoremstyle{definition}

\theoremstyle{remark}
\newtheorem{remark}[thm]{Remark}

\usepackage{todonotes}
\usepackage{euscript} 

\usepackage{enumitem}
\usepackage{esint} 

\DeclareMathOperator*{\essinf}{ess\,inf}

\DeclareUnicodeCharacter{043E}{\textendash}
\newcommand*{\bigchi}{\mbox{\Large$\chi$}}

\newcommand\QOm{Q_\Omega}
\newcommand\QBR{Q_{B_R}}

\newcommand\R{\mathbb{R}}
\newcommand\Sf{\mathbb{S}}
\newcommand{\tmSep}{{; }}

\title[Nonlocal Micromagnetics]{Nonlocal Micromagnetics: Compactness Criteria, Existence of Minimizers, and Brown's Fundamental Theorem}

\author[G. D. Fratta] {Giovanni Di Fratta}
\author[R. Giorgio]{Rossella Giorgio}
\author[L. Lombardini]{Luca Lombardini}	

\date{}
\AtEndDocument{\bigskip
	\hrule
	\medskip
         \small{
		\noindent
		{\sc G. Di Fratta},
		Dipartimento di Matematica e Applicazioni ``R. Caccioppoli'',
		Universit\`a degli Studi di Napoli ``Federico II'',
		Via Cintia, Complesso Monte S. Angelo, 80126 Naples, Italy.
		
		\noindent
		E-mail:
		\href{mailto:giovanni.difratta@unina.it}{\tt giovanni.difratta@unina.it}.
	}
	
	\medskip \medskip \medskip \hrule  \medskip
    \small{
		\noindent
		{\sc R. Giorgio},
		Institute of Analysis and Scientific Computing,
		TU Wien,
		Wiedner Hauptstra\ss e 8-10, 1040 Vienna, Austria
           {\sc and}
            \noindent MedUni Wien, Währinger Gürtel 18-20, 1090 Vienna, Austria.
            
		\noindent
		E-mail:
		\href{mailto:rossella.giorgio@tuwien.ac.at}{\tt rossella.giorgio@tuwien.ac.at}.
	}
	\medskip \medskip \medskip \hrule  \medskip
    \small{
		\noindent
		{\sc L. Lombardini},
        Institut f\"{u}r Mathematik, Fakult\"{a}t f\"{u}r Mathematik, Universit\"{a}t Wien, Oskar-Morgenstern-Platz 1, 1090 Vienna, Austria.

        \noindent
		E-mail:
		\href{mailto:luca.lombardini@univie.ac.at}{\tt luca.lombardini@univie.ac.at}.
        }
\medskip
\hrule
}

\begin{document}

\begin{abstract}
This paper investigates the existence and qualitative properties of minimizers for a class of nonlocal micromagnetic energy functionals defined on bounded domains. The considered energy functional consists of a symmetric exchange interaction, which penalizes spatial variations in magnetization, and a magnetostatic self-energy term that accounts for long-range dipolar interactions. Motivated by the extension of Brown's fundamental theorem on fine ferromagnetic particles to nonlocal settings, we develop a rigorous variational framework in \( L^2(\Omega ; \mathbb{S}^2) \) under mild assumptions on the interaction kernel \( j \), including symmetry, Lévy-type integrability, and prescribed singular behavior.  For spherical domains, we generalize Brown’s fundamental results by identifying critical radii \( R^* \) and \( R^{**} \) that delineate distinct energetic regimes: for \( R \leq R^* \), the uniform magnetization state is energetically preferable (\emph{small-body regime}), whereas for \( R \geq R^{**} \), non-uniform magnetization configurations become dominant (\emph{large-body regime}). These transitions are analyzed through Poincaré-type inequalities and explicit energy comparisons between uniform and vortex-like magnetization states.  

Our results directly connect classical micromagnetic theory and contemporary nonlocal models, providing new insights into domain structure formation in nanoscale magnetism. Furthermore, the mathematical framework developed in this work contributes to advancing theoretical foundations for applications in spintronics and data storage technologies.  
\end{abstract}

\subjclass{35R11, 49J45, 49S05, 82D40, 82D40, 35A15}

\keywords{Nonlocal energies\tmSep Lévy kernels\tmSep  Micromagnetics\tmSep Brown’s fundamental theorem\tmSep Energy minimization\tmSep  Fractional Sobolev spaces\tmSep Magnetic vortices\tmSep Single-domain particles}

{\maketitle}
	

\section{Introduction and Motivation}
The study of nonlocal energy functionals is central in
mathematical physics, offering a powerful framework for modeling systems
governed by long-range interactions. These functionals are particularly
relevant in materials science, where phenomena such as micromagnetics, phase transitions, and elasticity demand a nuanced treatment of interactions across
multiple scales. In the context of ferromagnetic materials, the interplay between short-range quantum mechanical effects and long-range magnetostatic forces dictates the formation of complex magnetization patterns---domain
walls, vortices, and skyrmions---that hold significant potential for
applications in spintronics and high-density data storage technologies.

This work focuses on the variational analysis of a nonlocal micromagnetic
energy functional defined on bounded domains. Specifically, we investigate the
energy functional
\begin{equation}
  \label{eq:mainfunJpW} \mathcal{E}_{\Omega} (m) := \mathcal{J}_{\Omega}
  (m) +\mathcal{W}_{\Omega} (m),
\end{equation}
where the terms $\mathcal{J}_{\Omega}$ and $\mathcal{W}_{\Omega}$ are defined
as
\begin{equation}
  \mathcal{J}_{\Omega} (m) := \iint_{\Omega \times \Omega} j (x - y) 
  \lvert m (x) - m (y) \rvert^2  \hspace{0.17em} dx \hspace{0.17em} dy,
  \label{eq:intro_exchange}
\end{equation}
and
\begin{equation}
  \mathcal{W}_{\Omega} (m) := - \int_{\Omega} h_d [m] (x) \cdot m (x) 
  \hspace{0.17em} dx. \label{eq:intro_magnetostatic}
\end{equation}
Here, $\mathcal{J}_{\Omega} (m)$ represents a symmetric nonlocal exchange
interaction, while $\mathcal{W}_{\Omega} (m)$ accounts for the so-called
magnetostatic self-energy. The functional $\mathcal{E}_\Omega$ is defined over the space $L^2
(\Omega ; \mathbb{S}^2)$ of square-integrable magnetization fields $m : \Omega
\to \mathbb{S}^2$ on a bounded domain $\Omega \subset \mathbb{R}^3$,
reflecting the physical constraint of local saturation.

Our work is motivated by {\emph{Brown's fundamental theorem of fine
ferromagnetic particles}}~{\cite{brown1969fundamental}}, a cornerstone in the
variational theory of micromagnetism. Brown's theorem establishes a critical
domain size below which uniform magnetization minimizes the classical
micromagnetic energy. Specifically, for ferromagnetic particles shaped as
spheres or triaxial ellipsoids, the ground state is uniformly magnetized if
the particle's diameter is below a critical size (\textit{small bodies
regime}). For larger particles, long-range dipole-dipole interactions
dominate, leading to non-uniform configurations, such as vortex-like magnetization patterns
(\textit{large bodies regime}). While Brown's result has already been rigorously formulated
in Sobolev spaces, where the Dirichlet energy approximates nonlocal
Heisenberg-type energies in the limit of short-range interactions, its
extension to nonlocal settings remains unexplored. This gap is particularly
relevant given the growing interest in nonlocal models, which better capture
nanoscale phenomena where long-range interactions and geometric confinement
play decisive roles.

A primary objective of this study is to identify physically meaningful
conditions on the \textit{exchange kernel} $j$ in \eqref{eq:intro_exchange}
that enable the development of a robust variational framework for
micromagnetism directly in $L^2  (\Omega ; \mathbb{S}^2)$. Specifically, we
aim to establish assumptions on $j$ that not only preserve the
phenomenological insights of Brown's theorem for spherical domains but also
ensure sufficient compactness, thereby enhancing the model's practical
applicability for analyzing long-range interactions in complex geometries.
This approach is pivotal for extending the predictive power of classical
micromagnetic models to more general, nonlocal settings. Moreover, our
proposed framework is highly relevant for emerging applications in nanoscale
magnetism, where nonlocal effects and intricate domain structures critically
shape magnetic behavior.

In this work, we begin by establishing compactness results for the energy
functional $\mathcal{E}_{\Omega}$, where $\Omega$ is a generic bounded open
set, thereby ensuring the existence of minimizers. The compactness result is
presented in a more abstract setting, which applies to a broader class of
nonlocal exchange functionals $\mathcal{J}_{\Omega}$ and to the case where
$\mathcal{W}_{\Omega}$ is replaced by a functional that is bounded from below
and continuous for the strong topology in $L^2$. Additionally, we
extend the analysis to a more general framework, where $\Omega$ is a bounded
open subset of $\R^n$ and the maps $m$ take values in a compact set
$\mathcal{N} \subseteq \R^{\ell}$. These results provide the necessary
foundations for the subsequent investigation of the energy functional in
specific domains, ensuring the necessary conditions for the existence of
minimizers.

Following this, we focus on domains $\Omega$ shaped as balls with radius $R$.
We establish that Brown's theorem remains valid in the nonlocal setting under
physically reasonable assumptions on the exchange kernel $j$. Specifically, we
prove the existence of a critical radius $R^{\ast}$ such that, for $R \leq
R^{\ast}$ (\textit{small bodies regime}), all minimizers of the nonlocal
energy functional $\mathcal{E}_{\Omega}$ exhibit spatially uniform
magnetization. Conversely, when $R \geq R^{\ast \ast}$, for a second threshold
radius $R^{\ast \ast} \geq R^{\ast}$ (\textit{large bodies regime}), the
ground states transition to non-uniform configurations, often characterized by
vortex-like structures. This behavior underscores the intricate interplay
between exchange interactions, which favor uniform magnetization and
magnetostatic interactions, which drive spatial variation, resulting in a
delicate energy balance that governs the observed transitions.

Before stating our results, we set the stage by introducing the proper
functional background and providing a brief review of the physical context
necessary to understand how our results fit into the variational theory of
micromagnetism. This includes a discussion of the nonlocal exchange
interaction, the magnetostatic self-energy, and the physical principles
underlying Brown's theorem. By unifying these elements, we aim to provide a
comprehensive framework for analyzing nonlocal micromagnetic energy
functionals and their dependence on domain size.

\subsection*{Outline of the paper}The structure of the paper is as follows.
The remainder of this section provides an overview of the physical framework
that motivated our study. In Section~\ref{Sec:MainResults}, we review the
state of the art in relation to our contributions, present a detailed
formulation of the problem, and state the main results. Sections~\ref{Sec:cmpt
ed exist} and~\ref{Sec:proofs-mag} are devoted to the proofs. Specifically,
Section \ref{Sec:cmpt ed exist} focuses on the compactness argument and the
existence result for the general nonlocal model, while Section~\ref{Sec:proofs-mag} extends Brown's theorem to the nonlocal setting.
\subsection{Physics context: Micromagnetics}\label{subs:magnetostatic-energy}
Micromagnetics is a continuum
theory rooted in the works of of Landau-Lifshitz~{\cite{LandauA1935}} and
Brown~{\cite{BrownB1963,BrownB1962}}, which describes magnetic phenomena at
the mesoscopic scale (1--1000 nm). At this intermediate resolution, for a
rigid ferromagnetic particle occupying a region $\Omega \subseteq \R^3$, the
magnetization is modeled as a vector field $M : \Omega \to \mathbb{R}^3$ with
constant magnitude $M_s$ (spontaneous magnetization), normalized here to
unity.
The theory balances competing energy contributions: exchange interactions
favoring alignment, magnetostatic forces promoting flux closure, and
anisotropy terms tied to crystalline structure. In this work, we focus on the
interplay between nonlocal exchange and magnetostatic energies, which govern
domain structure formation in the absence of lower-order contributions like
magnetocrystalline anisotropy and external fields, whose contributions do not
affect neither the existence of minimizers (see Corollary \ref{Cor:anisotropy+DMI}) nor the domain-dependence phenomena, since the leading contributions result to be the two nonlocal terms
$\mathcal{J}_{\Omega}$ and $\mathcal{W}_{\Omega}$ (cf. Brown
in~{\cite{brown1969fundamental}}).\textbf{}
In this setting, the observable magnetization states are the minimizers of the
\textit{micromagnetic energy functional} $\mathcal{E}_{\Omega} =
\mathcal{J}_{\Omega}  +\mathcal{W}_{\Omega}$ introduced in
\eqref{eq:mainfunJpW}, defined on the metric space $L^2  (\Omega ; \Sf^2)$.

The \emph{nonlocal} exchange interaction $\mathcal{J}_{\Omega} $ in \eqref{eq:intro_exchange} penalizes
non-uniformities in the magnetization orientation. The kernel $j :
\mathbb{R}^3 \to \mathbb{R}_+$ encodes interaction strength between magnetic
moments at positions $x$ and $y$, with symmetry ($j (z) = j (- z)$) ensuring
energy conservation. Physically, $\mathcal{J}_{\Omega} $ arises as the
continuum limit of discrete Heisenberg-type interactions, where $j (x - y)$
corresponds to exchange constants between atomic spins. Under localization ($j
(z) \sim \rho_{\varepsilon} (z) / |z|^2$ as $\varepsilon \to 0$),
$\mathcal{J}_{\Omega} $ recovers the classical Dirichlet energy, as shown
by Bourgain, Brezis, and Mironescu in~{\cite{bourgain2001another}}.

The \emph{magnetostatic self-energy} $\mathcal{W}_{\Omega}$ quantifies the
long-range dipolar interactions among the magnetic moments. For a
magnetization $m \in L^2  (\Omega ; \mathbb{R}^3)$ the magnetostatic
self-energy  $\mathcal{W}_{\Omega} (m)$ is defined by \eqref{eq:intro_magnetostatic}. In the expression of $\mathcal{W}_{\Omega} (m)$,
the field $h_d [m]$ (commonly referred to as the {\emph{demagnetizing
field}} or stray field) is characterized as the unique solution in $L^2 
(\mathbb{R}^3 ; \mathbb{R}^3)$ of the Maxwell--Ampére equations of
magnetostatics (see, e.g.,~{\cite{di2020variational}}):
\begin{equation}
  \label{eq:Maxwell} \left\{\begin{array}{ll}
    \mathrm{div} (m \chi_{\Omega} + h_d [m]) = 0 & \textrm{in }
    \mathbb{R}^3,\\
    \mathrm{curl} \hspace{0.17em} h_d [m] = 0 & \textrm{in } \mathbb{R}^3 .
  \end{array}\right.
\end{equation}
In the above, $m \chi_{\Omega}$ denotes the extension of $m$ by zero outside
$\Omega$, and $h_d$ acts as a nonlocal operator on $L^2  (\mathbb{R}^3 ;
\mathbb{R}^3)$. It is well known that $h_d$ is a bounded, self-adjoint
operator with norm one and is negative definite; indeed, one may show that
\begin{equation}
  \label{term:magnetostatic} \mathcal{W}_{\Omega} (m) = \int_{\mathbb{R}^3} |
  h_d [m] (x) |^2  \hspace{0.17em} dx,
\end{equation}
for all $m \in L^2  (\Omega ; \mathbb{R}^3)$.

In the particular case where $\Omega$ is a bounded Lipschitz domain and $m \in
H^1 (\Omega ; \mathbb{S}^2)$, the unique solution of \eqref{eq:Maxwell} can be
expressed as $h_d [m] = - \nabla \Phi$, with $\Phi \in H^1 (\mathbb{R}^3)$
defined for $x \in \Omega$ by
\begin{equation}
  \label{eq:Phi} \Phi (x) = \frac{1}{4\pi} \left\{ \int_{\Omega} \frac{- \mathrm{div} m (y)}{|x - y|}
  \hspace{0.17em} dy + \int_{\partial \Omega} \frac{m (y) \cdot n (y)}{|x -
  y|}  \hspace{0.17em} dS (y) \right\} .
\end{equation}
Here, $n$ denotes the outward unit normal on $\partial \Omega$, and for
simplicity, we omitted mathematically irrelevant constants. For further
properties and detailed discussions regarding the demagnetizing field operator
and the magnetostatic self-energy, we refer the reader
to~{\cite{di2020variational,MR2094598}}.

\section{State of the art and contributions of the present work} \label{Sec:MainResults}
\subsection{State of the art}
Nonlocal models have emerged as a prominent area of interest in applied
mathematics, largely because of their effectiveness in capturing intricate
behaviors, microstructural changes, and interactions across different
geometric scales, all while remaining aligned with experimental observations.
These models find extensive application in continuum mechanics and phase
transitions, as they inherently accommodate long-range interactions. While
there is extensive literature on nonlocal models; this discussion will focus
on specific studies that are particularly relevant to our findings and
applications in micromagnetics.

In the classical theory of micromagnetism, the exchange energy is local and
given by the classical Dirichlet energy $\int_{\Omega} | \nabla m|^2$. A cornerstone of the theory is the so-called \textit{Brown's fundamental theorem
of fine ferromagnetic particles} {\cite{brown1969fundamental}}: there exists a
critical diameter $d_c$ such that, for $d < d_c$, uniform magnetization states
are the sole {\emph{global}} minimizers of the micromagnetic free energy
functional. The result explains the high coercivity observed in fine
ferromagnetic particles: in sufficiently small single-domain particles, where
inter-particle magnetostatic interactions are negligible; magnetization
reversal occurs through rigid rotation instead of domain wall displacement,
resulting in enhanced coercivity. In its rigorous form, in {\cite{brown1969fundamental}}, the result has been rigorously established for
spherical particles, whereas real materials often possess elongated
geometries. Aharoni expanded upon Brown's findings to include prolate
spheroids~{\cite{Aharoni_1988}}, and further extensions to general triaxial
ellipsoidal particles have been provided in {\cite{di2012generalization}} (see
also {\cite{Alouges_2004}}) where also an estimate of $d_c$ is given in terms
of the so-called demagnetizing factors of the general ellipsoid
{\cite{di2016newtonian}}. The result was further extended in
{\cite{alouges2015liouville}}, where it was proven that {\emph{local}}
minimizers are necessarily spatially uniform if the ferromagnetic particle
occupies a sufficiently small ellipsoidal region, as well as a quantitative estimate of the minimal exchange energy in sufficiently
small (uniformly) convex particles, which justify the Stoner-Wohlfarth approximation.

In the context of nonlocal interactions, to the best of authors' knowledge,
Rogers introduced the first model for symmetric exchange energy in
{\cite{rogers1991nonlocal}}; in there, the author proposes a new model designed
to describe macroscopic effects in ferromagnetic materials in which the
nonlocal term
\begin{equation}
  \mathcal{E}_{{nl}} (m) = \iint_{\Omega \times \Omega} m (x) \cdot m (y) k (x - y)
  \hspace{0.17em} d  x \hspace{0.17em} d  y,
\end{equation}
where, up to material-dependent constants, the kernel $k (x - y) = e^{- \gamma |x - y|} / |x - y|$ replaces the
Dirichlet energy. Also, in there, it is proved an existence result for
measure-valued magnetizations minimizing the nonlocal exchange energy. The drawback is that there is no way of ensuring that the limit of minimizing
sequence satisfies the nonconvex constraint of $m$ being
$\mathbb{S}^2$-valued, i.e., the weak limit of the minimizing sequence is not
a solution to the problem, and one considers Young's
measure associated with the sequence as a solution of the minimization
problem. The results in {\cite{rogers1991nonlocal}} are inspired by the work
of {\cite{James_1990}}, who studied measure-valued minimizers within the
context of the theory when the exchange energy is omitted; the \ study \
reveals that even in the absence of exchange energy, minimizing sequences can
accurately model many features of observed domain structure.\textbf{}

One of the advantages of these nonlocal models is that they avoid the
coercivity paradox described by Brown (see {\cite{BrownB1963}}). This paradox
stems from the observation that these models tend to predict excessively wide hysteresis loops. In {\cite{Brandon_1992}}, the authors demonstrate
that the nonlocal model introduced in {\cite{rogers1991nonlocal}} does not
suffer from the coercivity paradox.

In this direction, a central challenge in the study of nonlocal micromagnetics
is the identification of suitable mathematical conditions on the exchange
kernels that guarantee a well-posed variational framework in the weaker
setting of $L^2 (\Omega, \mathbb{S}^2)$. Addressing this issue, our work
extends Brown’s theorem to a nonlocal setting, thereby establishing a crucial theoretical link between classical micromagnetics and modern
variational approaches to nonlocal interactions that remain consistent with
physical observations. This extension provides a rigorous foundation for the
class of admissible kernels required to guarantee the existence and
qualitative properties of minimizers of nonlocal micromagnetic energy
functionals, offering new insights into the influence of long-range
interactions on magnetic domain structures.

Our existence theorem is based on a compactness argument in $L^2 (\Omega ;
\mathcal{N})$, where $\mathcal{N}$ is a compact set, that generalizes the
findings of~{\cite{jarohs2020weth}}, which established compactness in
$L^2_{\mathrm{loc}} (\R^n)$ under the same kernel assumptions. A related
compactness result was previously obtained in~{\cite{correa2018nonlocal}},
where additional assumptions on the interaction kernel---such as radial
symmetry, positivity, and controlled growth near the origin---were imposed
(see conditions (H1) and (H2) in~{\cite{correa2018nonlocal}}).

The interaction kernels considered in this work, as well as in the aforementioned studies, belong to the class of Lévy kernels. They can be considered a generalization of the \textit{fractional} kernel (see, e.g.,~\cite{2012hitchhiker}), and indeed, in the literature, Lévy kernels are frequently associated with nonlocal operators in
the study of partial differential equations (PDEs), including Dirichlet and
Neumann problems (see, e.g.,~{\cite{felsinger2015dirichlet,foghem2022general}}
and references therein). However, the nonlocal energies corresponding to these
operators differ slightly from those considered here, as they are typically
defined for functions in the entire space $\R^n$ but subject to nonlocal
boundary conditions.

In contrast, energies of the type \eqref{eq:intro_exchange} have received
less attention in this context and are often analyzed from a probabilistic
perspective, particularly in connection with censored Lévy processes (see,
e.g.,~{\cite{bogdan2003censored}}). These processes, which involve jumps and
heavy tails are particularly suited for modeling phenomena where abrupt
changes or outliers are significant.


Finally, we mention that a similar variational framework has been explored in
the study of nonlocal problems arising in peridynamics, where the governing
equations are formulated using nonlocal integral operators rather than
classical local differential operators (see,
e.g.,~{\cite{mengesha2014bond,mengesha2015variational,mengesha2023optimal}}).


\subsection{Contributions of the present work I: Existence of minimizers}
A key objective of this work is to establish a rigorous variational framework for nonlocal micromagnetics, focusing on the interplay between exchange interactions and magnetostatic self-energy. For that, we introduce a general class of nonlocal functionals characterized by an interaction kernel satisfying precise analytical properties.

We consider an exchange kernel \( j: \mathbb{R}^n \to [0,+\infty] \) that adheres to the following assumptions:
\begin{enumerate}[label=(J\arabic*)]
    \item \label{J1} \textbf{Symmetry}: The kernel is symmetric, meaning
    \[
    j(-z) = j(z) \quad \text{for all } z \in \mathbb{R}^n.
    \]
    \item\label{J2} \textbf{Lévy-type Condition}: The kernel satisfies
    \[
    L_j \coloneqq \int_{\mathbb{R}^n} \min\{1, |z|^2\} \, j(z) \, dz < +\infty.
    \]
    \item \label{J3}\textbf{Non-integrability Condition}: To ensure the nonlocal nature of the interaction, 
    the kernel satisfies
    \[
    j \notin L^1(B_R) \quad \text{for every } R > 0,
    \]
    where \( B_R \) denotes the ball of radius \( R \) centered at the origin.
\end{enumerate}

\noindent For a compact set $\mathcal N\subseteq\R^\ell$ and a bounded open set $\Omega\subseteq\R^n$, we define the function spaces $
    L^2 (\Omega; \mathcal N):= \{ m \in L^2 (\Omega; \R^\ell) : m \in \mathcal N \text{ a.e.\ in } \Omega \}$
and 
\begin{equation}
    \mathcal X^j(\Omega;\mathcal N)\coloneqq\left\{m\in L^2(\Omega;\mathcal N)\,:\,\mathcal{J}_{\Omega}(m) <+\infty\right\},
\end{equation}
where the nonlocal interaction energy $\mathcal{J}_{\Omega}$ is given by \eqref{eq:intro_exchange}.
The notation extends to cases where \( m \) is not constrained to take values in the compact set \( \mathcal{N} \). Specifically, we use \( \mathcal{X}^j(\Omega; \mathbb{R}^\ell) \) to denote the space of functions in \( L^2(\Omega; \mathbb{R}^\ell) \) with finite \( \mathcal{J}_\Omega \)-energy.

To consider more general interaction mechanisms that can be of some relevance in physical contexts, even different from  micromagnetics (see also Remark~\ref{rmk:motpsi}),
we introduce a function $\psi : \mathcal{N} \times \mathcal{N} \to [0, +
\infty)$ and a measurable kernel $K : \Omega \times \Omega \to [0, + \infty]$
satisfying, for some constant $\Lambda \geq 1$, the following \textit{growth conditions}:
\begin{equation}\label{eq:control_psi}
    \frac{1}{\Lambda}|p-q|^2\leq\psi(p,q)\leq\Lambda|p-q|^2\qquad\textrm{for all }p,q\in\mathcal N,
\end{equation}
and 
 \begin{equation}\label{eq:control_kappa}
    \frac{1}{\Lambda}j(x-y)\leq K(x,y)\qquad\textrm{for a.e.\ }x,y\in\Omega. 
     \end{equation}
These conditions ensure that $K$ controls the nonlocal interaction strength while $\psi$ can be tough as modeling specific geometric or energetic properties of the target manifold. Then, for every $m\in\mathcal X^j(\Omega;\mathcal N)$ we define the functionals 
\begin{equation}\label{functional:K}
\mathcal K_\Omega(m)\coloneqq
\iint_{\Omega\times\Omega}K(x,y)|m(x) -m(y)|^2 \,dx\,dy
\end{equation}
and 
\begin{equation}\label{func:psi}
\mathcal F_\Omega(m)\coloneqq
\iint_{\Omega\times\Omega}K(x,y)\,\psi(m(x),m(y)) \,dx\,dy.
\end{equation}
From \eqref{eq:control_psi} and \eqref{eq:control_kappa}, we deduce the energy bounds
\begin{equation}\label{eq:en_domain}
\mathcal{J}_\Omega(m) \leq \Lambda \mathcal K_\Omega(m) \leq \Lambda^2 \mathcal F_\Omega(m) \leq \Lambda^3 \mathcal K_\Omega(m),
\end{equation}
which establish a fundamental connection between these functionals: $\mathcal F_\Omega$ bounded is equivalent to having $\mathcal K_\Omega$ bounded, and both situations imply a bound on $ \mathcal{J}_{\Omega}$. In particular, the functional 
$\mathcal F_\Omega(m)$ is finite if and only if $m$ belongs to the space
\begin{equation*}
    \mathcal X^K(\Omega;\mathcal N)\coloneqq\left\{m\in L^2(\Omega;\mathcal N)\,:\, \mathcal K_\Omega(m) <+\infty \right\},
\end{equation*}
and we have the inclusion $\mathcal X^K(\Omega;\mathcal N)\subseteq\mathcal X^j(\Omega;\mathcal N)$. 

Our first result is a compactness theorem, which is crucial for the variational analysis of the minimizers of the nonlocal energy functionals we are interested in; in primis, the energy functional \eqref{eq:mainfunJpW} issuing from the variational theory of micromagnetism.
\begin{thm}[Compactness]\label{thm:Compactness}
    Let $(m_k)_k\subseteq \mathcal X^K(\Omega;\mathcal N)$ be such that
\begin{equation}\label{eq:nonloso}
    \sup_{h\in\mathbb N}\mathcal F_\Omega(m_h)<+\infty.
    \end{equation}
Then, there exist a subsequence $(m_{k_h})_h$ and a function $m\in \mathcal X^K(\Omega;\mathcal N)$ such that
\begin{equation*}
\lim_{h\to\infty}\|m_{k_h}-m\|_{L^2(\Omega;\R^\ell)}=0.
\end{equation*}
\end{thm}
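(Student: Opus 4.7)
The overall plan is to reduce the convergence to an abstract $L^2$-compactness criterion for sequences with uniformly bounded Lévy-type nonlocal energy, and then to identify the limit as an element of $\mathcal X^K(\Omega;\mathcal N)$ via a.e.\ convergence and Fatou's lemma.

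The first step is to record the basic uniform bounds. Since $\mathcal N\subseteq\R^\ell$ is compact, the sequence $(m_k)$ is bounded in $L^\infty(\Omega;\R^\ell)$, and \emph{a fortiori} in $L^2(\Omega;\R^\ell)$. Combining the assumption \eqref{eq:nonloso} with the chain of inequalities \eqref{eq:en_domain} yields simultaneously
$$\sup_h\mathcal K_\Omega(m_h)\le\Lambda\sup_h\mathcal F_\Omega(m_h)<+\infty\quad\text{and}\quad\sup_h\mathcal{J}_{\Omega}(m_h)\le\Lambda^2\sup_h\mathcal F_\Omega(m_h)<+\infty,$$
so $(m_k)$ is bounded in the Hilbert-type space naturally associated with the kernel $j$, namely $\mathcal X^j(\Omega;\R^\ell)$ endowed with $\|u\|^2_{L^2}+\mathcal{J}_{\Omega}(u)$.

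The heart of the argument is then the compact embedding $\mathcal X^j(\Omega;\R^\ell)\hookrightarrow L^2(\Omega;\R^\ell)$ under \ref{J1}-\ref{J3}. I would invoke the compactness theorem of~\cite{jarohs2020weth}: any sequence bounded in $L^2_{\mathrm{loc}}(\R^n)$ with uniformly bounded nonlocal energy associated to a Lévy-type kernel satisfying \ref{J1}-\ref{J3} is relatively compact in $L^2_{\mathrm{loc}}(\R^n)$. Extending each $m_k$ by a fixed point of $\mathcal N$ outside $\Omega$ (or by zero and working in a slightly enlarged domain), one gets a sequence to which that criterion applies, delivering a subsequence $(m_{k_h})$ and some $m\in L^2(\Omega;\R^\ell)$ with $m_{k_h}\to m$ strongly in $L^2(\Omega;\R^\ell)$. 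This is the step I expect to be the main obstacle: at its core lies a Kolmogorov-Fréchet-Riesz argument in which \ref{J2} provides the overall integral control and \ref{J3} is precisely what upgrades the averaged control $\int j(z)\,\|m_k(\cdot+z)-m_k\|^2_{L^2(\Omega\cap(\Omega-z))}\,dz\le C$ to \emph{uniform} $L^2$-equicontinuity of translations, via a mollification/truncation scheme where the singularity of $j$ is transferred onto the mollifying family. Without \ref{J3} the kernel could fail to see the small-translation behaviour of $(m_k)$, and compactness would be lost; hence all three hypotheses enter simultaneously.

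Once strong $L^2$-convergence is in hand, I would pass to a further subsequence (not relabelled) so that $m_{k_h}\to m$ almost everywhere in $\Omega$. Since $\mathcal N$ is closed and each $m_{k_h}$ takes values in $\mathcal N$ a.e., the limit $m$ does as well, so $m\in L^2(\Omega;\mathcal N)$. To place $m$ in $\mathcal X^K(\Omega;\mathcal N)$, I would apply Fatou's lemma to the nonnegative integrand $K(x,y)|m_{k_h}(x)-m_{k_h}(y)|^2$, which converges a.e.\ on $\Omega\times\Omega$ to $K(x,y)|m(x)-m(y)|^2$, obtaining
$$\mathcal K_\Omega(m)\le\liminf_{h\to\infty}\mathcal K_\Omega(m_{k_h})\le\Lambda\sup_h\mathcal F_\Omega(m_h)<+\infty,$$
where the second inequality uses \eqref{eq:en_domain}. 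This concludes the identification of the limit in $\mathcal X^K(\Omega;\mathcal N)$ and completes the proof.
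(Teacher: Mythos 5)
There is a genuine gap at the central step, namely the claim that ``extending each $m_k$ by a fixed point of $\mathcal N$ outside $\Omega$ (or by zero and working in a slightly enlarged domain), one gets a sequence to which that criterion applies.'' The compactness theorem of~\cite{jarohs2020weth} requires a uniform bound on the \emph{full-space} energy $\mathcal{J}_{\R^n}$, and the extensions you propose do not inherit such a bound from $\sup_h\mathcal{J}_{\Omega}(m_h)<\infty$. Writing $\bar m_k:=m_k\chi_{\Omega}+p\,\chi_{\R^n\setminus\Omega}$ for a fixed $p\in\mathcal N$, one has
\begin{equation*}
\mathcal{J}_{\R^n}(\bar m_k)=\mathcal{J}_{\Omega}(m_k)+2\int_{\Omega}\Big(\int_{\R^n\setminus\Omega}j(x-y)\,dy\Big)\,|m_k(x)-p|^2\,dx,
\end{equation*}
and the cross term is not controlled by the data: for $x$ at distance $\delta(x)$ from $\partial\Omega$ the inner integral is only bounded by a quantity of order $L_j\,\delta(x)^{-2}$, which blows up at the boundary precisely because of the non-integrability \ref{J3}, while $|m_k-p|$ has no reason to vanish there (think of $\mathcal N=\Sf^2$, $m_k\equiv e_1$, $p=-e_1$, and $j(z)=|z|^{-3-2s}$ with $s\ge 1/2$: then $\mathcal{J}_{\Omega}(m_k)=0$ but the cross term is infinite). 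Extending by zero into a slightly enlarged domain suffers from exactly the same boundary cross term. So the sequence you feed into the Jarohs--Weth criterion need not have finite, let alone uniformly bounded, energy, and the strong $L^2(\Omega)$ convergence does not follow as stated.

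The paper's proof is designed around this obstruction. It multiplies $m_{k}$ by cut-off functions $\varphi_k$ supported in an exhaustion $\Omega_k\Subset\Omega_{k+1}\Subset\Omega$, so that the truncated functions vanish at a positive distance from $\partial\Omega$; Lemma~\ref{lem:tagliato} controls the energy of $\varphi_k m$ and Lemma~\ref{lem:esteso} controls the energy of its extension by zero (the factor $d^{-2}$, $d=\operatorname{dist}(\Omega',\partial\Omega)>0$, is exactly the quantity that is unbounded in your construction). The price is that~\cite{jarohs2020weth} then only yields convergence on each $\Omega_k$, so a diagonal extraction over $k$ is needed, and the passage from $L^2(\Omega_k)$-convergence for all $k$ to $L^2(\Omega)$-convergence uses the uniform $L^\infty$ bound coming from the compactness of $\mathcal N$ together with $|\Omega\setminus\Omega_k|\to 0$ --- a step absent from your sketch because your (flawed) extension would have delivered convergence on all of $\Omega$ at once. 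Your opening reductions via \eqref{eq:en_domain} and your closing argument (a.e.\ convergence, closedness of $\mathcal N$, Fatou to get $m\in\mathcal X^K(\Omega;\mathcal N)$) do match the paper, but the proof is incomplete without a localization scheme of the above type.
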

\noindent This result extends the compactness arguments developed in \cite[Theorem 1.1]{jarohs2020weth}, providing a foundational step for establishing existence theorems for minimizers of nonlocal micromagnetic energies. The proof is presented in
Section \ref{subsec:cmptc}.

\begin{remark}[On the necessity of Assumption \ref{J3}]\label{rmk:non-integrable}
Assumption \ref{J3} (which ensures the non-integrability of $j$) plays a key role in establishing strong compactness. Indeed, as observed in~\cite{jarohs2020weth}, if $j \in L^1(\R^n)$, then for any $m \in L^2(\Omega; \R^\ell)$, the nonlocal energy satisfies the estimate  
\begin{equation*}
\iint_{\Omega\times\Omega}|m(x)-m(y)|^2 j(x-y)\,dx\,dy  \leq  4 \, \|m \|^{2}_{L^2(\Omega; \R^\ell)} \| j\|_{L^1(\R^n)}.
\end{equation*} 
As a consequence, the space $\mathcal X^j(\Omega;\R^\ell)$ reduces to $L^2(\Omega; \R^\ell)$, leading to a loss of compactness.  
\end{remark}

\begin{remark}[About Assumption \ref{J2}]
The L\'evy-type condition \ref{J2} places two requirements on the kernel $j$: first, it prevents $j$ from being too singular near zero, and second, it ensures that $j$ is integrable away from the origin, including at infinity. If we restrict our attention to a fixed bounded open set $\Omega$ of diameter $R$, we may equivalently replace \ref{J2} with
\[
\int_{B_R} |z|^2 j(z) dz < +\infty.
\]
However, the integrability of $j$ at infinity becomes essential when examining the behavior of minimizers in balls of growing radius, as treated in Theorem~\ref{thm:large_bodies} (see also Lemma~\ref{lemma:j6}).
\end{remark}

\begin{remark}[Motivation for the function $\psi$]\label{rmk:motpsi}
The motivation for considering a general function $\psi$ satisfying \eqref{eq:control_psi}, rather than $|p-q|^2$, arises when the target space $\mathcal{N}$ is a compact Riemannian submanifold of $\mathbb{R}^\ell$. In this setting, $\mathcal{N}$ is naturally endowed with its intrinsic geodesic distance $d_{\mathcal{N}}$, which is equivalent to the Euclidean distance induced by the ambient space, meaning that there exists a constant $C_{\mathcal{N}} \geq 1$ such that
\begin{equation}\label{eq:geo_dist} |p-q|\leq d_{\mathcal{N}}(p,q)\leq C_{\mathcal{N}} |p-q| \qquad \text{for all } p,q\in\mathcal{N}. \end{equation}
This equivalence suggests that choosing $\psi = d_{\mathcal{N}}^2$ in the definition of the nonlocal energy $\mathcal{F}_\Omega$ provides a more faithful measure of interaction between values $m(x)$ and $m(y)$, preserving the intrinsic geometry of $\mathcal{N}$.
A similar reasoning applies whenever a compact subset $\mathcal{N} \subseteq \mathbb{R}^\ell$ is equipped with a distance $d_{\mathcal{N}}$ that is equivalent to the Euclidean distance.
\end{remark}

Our second result concerns the existence of minimizers for the nonlocal energy functional \eqref{eq:mainfunJpW}. In fact, we can prove the existence of minimizers for a broader class of nonlocal energy functionals. Specifically, we consider the following functional:
\begin{equation}\label{func:total}
    \mathcal{I}_{\Omega}(m) : = \mathcal{F}_{\Omega}(m) + \mathcal{P}_{\Omega}(m) = \iint_{\Omega\times\Omega}K(x,y)\,\psi(m(x),m(y))\,dx\,dy \, + \, \mathcal{P}_{\Omega}(m), 
\end{equation} 
which is defined for every $m \in \mathcal{X}^K (\Omega ; \mathcal{N})$. The
first term represents the functional $\mathcal{F}_{\Omega}$ introduced in
\eqref{func:psi}, while the second term $\mathcal{P}_{\Omega} : L^2 (\Omega ;
\mathcal{N}) \to \mathbb{R}$ is a functional that is bounded from below and
continuous for the strong topology in $L^2$.

Minimizing the functional $\mathcal{F}_{\Omega}$ by itself is trivial, as it
does not require any conditions on the kernel $K$ (other than its
non-negativity and its being finite almost everywhere), since
$\mathcal{F}_{\Omega} \geq 0$ and $\mathcal{F}_{\Omega} (m) = 0$ whenever $m$ is a constant configuration.
On the other hand, given the non-convex constraint of $m$ being $\mathcal{N}$-valued, minimizing the functional $\mathcal{I}_{\Omega}$, which is
the sum of the nonlocal interaction $\mathcal{F}_{\Omega}$ and the energy
$\mathcal{P}_{\Omega}$ (of which the magnetostatic self-energy
$\mathcal{P}_{\Omega} =\mathcal{W}_{\Omega}$, as defined in
\eqref{eq:intro_magnetostatic}, is an example), requires additional considerations.
In this context, we establish the following general existence result, whose proof is provided in Section
\ref{subsec:existence} as a direct consequence of the compactness result stated in Theorem \ref{thm:Compactness}.

\begin{thm}[Existence of minimizers]\label{thm:existence}
The functional $\mathcal{I}_{\Omega}$ defined in \eqref{func:total}, admits at least a minimizer in the class $\mathcal{X}^K(\Omega;\mathcal N).$  
\end{thm}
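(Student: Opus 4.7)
The plan is the direct method of the calculus of variations: extract a strongly convergent subsequence from a minimizing sequence via Theorem~\ref{thm:Compactness}, and then pass to the limit separately in $\mathcal{F}_\Omega$ and in $\mathcal{P}_\Omega$. The substantive work is already packaged inside the compactness theorem, so what remains is essentially bookkeeping.

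First, I would check that the infimum is finite. Any constant configuration $m \equiv p_0$ with $p_0 \in \mathcal{N}$ belongs to $\mathcal{X}^K(\Omega;\mathcal{N})$, since \eqref{eq:control_psi} forces $\psi(p_0,p_0)=0$ and hence $\mathcal{F}_\Omega(m)=0$; together with the lower bound on $\mathcal{P}_\Omega$ and the non-negativity of $\mathcal{F}_\Omega$, this yields $-\infty<\inf_{\mathcal{X}^K(\Omega;\mathcal{N})}\mathcal{I}_\Omega<+\infty$. Fixing a minimizing sequence $(m_k)_k \subseteq \mathcal{X}^K(\Omega;\mathcal{N})$, the bound $\mathcal{P}_\Omega \geq -C_0$ combined with $\mathcal{I}_\Omega(m_k) \to \inf \mathcal{I}_\Omega$ gives $\sup_k \mathcal{F}_\Omega(m_k) < +\infty$, i.e.\ the hypothesis \eqref{eq:nonloso}. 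Applying Theorem~\ref{thm:Compactness} and passing to a further subsequence for pointwise convergence, I obtain $m \in \mathcal{X}^K(\Omega;\mathcal{N})$ with $m_k \to m$ strongly in $L^2(\Omega;\R^\ell)$ and a.e.\ in $\Omega$.

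To verify $\mathcal{I}_\Omega(m) \leq \liminf_k \mathcal{I}_\Omega(m_k)$, the strong-$L^2$ continuity of $\mathcal{P}_\Omega$ directly yields $\mathcal{P}_\Omega(m_k) \to \mathcal{P}_\Omega(m)$. For the nonlocal term, the a.e.\ convergence of $m_k$ on $\Omega$ lifts (by Fubini) to a.e.\ convergence of $(m_k(x), m_k(y))$ to $(m(x), m(y))$ on $\Omega \times \Omega$, which by continuity of $\psi$ on the compact set $\mathcal{N} \times \mathcal{N}$ transfers to a.e.\ convergence of the nonnegative integrands $K(x,y)\psi(m_k(x),m_k(y))$. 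Fatou's lemma then gives $\mathcal{F}_\Omega(m) \leq \liminf_k \mathcal{F}_\Omega(m_k)$, and combining the two pieces produces $\mathcal{I}_\Omega(m) \leq \inf \mathcal{I}_\Omega$, so $m$ is a minimizer.

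The main obstacle does not sit in this argument but inside Theorem~\ref{thm:Compactness}: it is strong (not merely weak) $L^2$ convergence that makes the limit passage immediate, since this is what controls both the nonlinear composition with $\psi$ and the non-convex constraint $m \in \mathcal{N}$. A minor technical point is the continuity of $\psi$, which is implicit in the motivating setup of Remark~\ref{rmk:motpsi} (e.g.\ $\psi = d_{\mathcal{N}}^2$ or $\psi(p,q) = |p-q|^2$); without it, one would need to replace the Fatou step by an explicit lower-semicontinuity hypothesis on $\psi$.
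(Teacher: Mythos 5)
Your proposal is correct and follows essentially the same route as the paper: bound $\mathcal{F}_\Omega$ along a minimizing sequence using the lower bound on $\mathcal{P}_\Omega$, invoke Theorem~\ref{thm:Compactness} for strong $L^2$ convergence, then combine Fatou's lemma with the continuity of $\psi$ for the nonlocal term and the assumed strong-$L^2$ continuity of $\mathcal{P}_\Omega$ for the rest. Your added remarks (finiteness of the infimum via constant configurations, and the implicit continuity assumption on $\psi$ that the paper also uses without stating) only make the bookkeeping more explicit.
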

It is worth noting that the existence of a minimizer for
$\mathcal{I}_{\Omega}$, at least in some specific cases, can be established
through alternative methods, without relying on the direct method of the
Calculus of Variations, and thus without invoking Theorem
\ref{thm:Compactness}. An example of this approach is given in Theorem
\ref{prop:exist-constanti}.

\begin{remark}
  The continuity of the functional $\mathcal{P}_{\Omega}$ with respect to the
  strong topology in $L^2$ can be replaced by continuity in $L^p$ for any $p
  \in [1, \infty)$, since $\Omega$ is bounded and $m \in \mathcal{N}$ almost
  everywhere.
\end{remark}

\subsection{Contributions of the present work II: Applications to Micromagnetics}\label{subsec:micromag_results}
In this section, we explore the feasibility of
developing a variational framework for Micromagnetics within the solely $L^2$ setting. Specifically, we focus on the nonlocal model in three dimensions ($n=\ell=3$). Let $\Omega \subseteq \R^3$ be a bounded domain, and consider the target set $\mathcal{N} = \Sf^2$ which encodes the locally saturated constraint imposed by Micromagnetic
Theory.

For each $m \in \mathcal{X}^{j}(\Omega; \Sf^2)$, we study the associated energy functional $\mathcal{E}_{\Omega}(m)=\mathcal{J}_{\Omega}(m) + \mathcal{W}_{\Omega}(m)$ given by \eqref{eq:mainfunJpW}, 
where we recall, the term $\mathcal{J}_\Omega(m)$
represents the nonlocal exchange energy functional as defined in \eqref{eq:intro_exchange}, while the term $\mathcal{W}_{\Omega}(m)$  
represents the magnetostatic self-energy already introduced in \eqref{eq:intro_magnetostatic}. This latter term aligns well with
our existence result (see Theorem~\ref{thm:existence}), as it is always non-negative and continuous for the strong topology in $L^2$ (see Section \ref{subs:magnetostatic-energy}).

The interaction kernel $j$ is always assumed to satisfy Assumptions \ref{J1}-\ref{J3}. In specific parts of the analysis, additional conditions on $j$ are required to derive sharper results.
We list them below.
\begin{enumerate}[label=(J\arabic*)]
    \setcounter{enumi}{3}
    \item\label{J4} We assume that
    \begin{equation}\label{cond:inf}
    \QOm : = \essinf_{|z|< \operatorname{diam}(\Omega)}  j(z) > 0,
     \end{equation}
    \item \label{J5} There exist $R_0 > 0$, $C > 0$ and $ s \in (0,1)$ such that
    \begin{equation}
    j(z) \geq \frac{C}{|z|^{3 + 2s}} \quad \textrm{for every } 
    z \in B_{R_0}.
    \end{equation}
    where, we recall, $B_{R_0}$ denotes the ball centered at the origin with radius $R_0$.
\end{enumerate}
Later on, we specify in which cases Assumptions \ref{J4}-\ref{J5} are needed.
\begin{remark} 
In Assumption \ref{J5}, we require a lower bound within a given ball provided by the function $|z|^{-(3 + 2s)}$, which is precisely the kernel that appears in the definition of the classical $H^s$-seminorm. It is important to note that Assumption \ref{J5} is stronger than the non-integrability condition in Assumption \ref{J3}, which is the minimal assumption to get strong compactness results in the relevant function spaces. 
Moreover, if $R_0\geq \operatorname{diam}(\Omega)$, then Assumption \ref{J5} clearly implies also Assumption \ref{J4}.    
\end{remark}

The L\'evy-type condition \ref{J2} directly implies the following integral growth estimate.

\begin{lemma}\label{lemma:j6}
    Let $j:\R^n\to[0,+\infty]$ be a Borel-measurable function satisfying condition \ref{J2}. Then
    \[
    \lim_{R\to\infty}\frac{1}{R^2}\int_{B_R}|z|^2 j(z)\, dz=0.
    \]
\end{lemma}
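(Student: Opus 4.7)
The plan is to split the integral into a small-$z$ part and a large-$z$ part, using the fact that the Lévy condition \ref{J2} controls $|z|^2 j(z)$ near the origin and controls $j(z)$ away from the origin separately. More precisely, condition \ref{J2} gives us simultaneously that
\[
\int_{B_1}|z|^2 j(z)\,dz < +\infty \qquad \text{and} \qquad \int_{\R^n\setminus B_1} j(z)\,dz < +\infty,
\]
so in particular the tail mass $\int_{\{|z|>M\}} j(z)\,dz$ tends to $0$ as $M\to +\infty$.

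First, I would fix $\varepsilon>0$ and choose $M\geq 1$ large enough so that
\[
\int_{\{|z|>M\}} j(z)\,dz < \varepsilon.
\]
Then, for any $R>M$, I would decompose
\[
\frac{1}{R^2}\int_{B_R}|z|^2 j(z)\,dz \;=\; \frac{1}{R^2}\int_{B_M}|z|^2 j(z)\,dz \;+\; \frac{1}{R^2}\int_{B_R\setminus B_M}|z|^2 j(z)\,dz.
\]
The first piece is a fixed finite constant (bounded by $\int_{B_1}|z|^2 j(z)\,dz + M^2\int_{\R^n\setminus B_1} j(z)\,dz$) divided by $R^2$, so it tends to $0$ as $R\to +\infty$. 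For the second piece, I would use the crude bound $|z|^2\leq R^2$ on $B_R\setminus B_M$ to get
\[
\frac{1}{R^2}\int_{B_R\setminus B_M}|z|^2 j(z)\,dz \;\leq\; \int_{B_R\setminus B_M} j(z)\,dz \;\leq\; \int_{\{|z|>M\}} j(z)\,dz \;<\; \varepsilon.
\]

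Combining the two estimates gives $\limsup_{R\to\infty} R^{-2}\int_{B_R}|z|^2 j(z)\,dz \leq \varepsilon$, and letting $\varepsilon\to 0$ yields the conclusion. There is no real obstacle here; the only subtle point is recognizing that one must split at a threshold $M$ that is \emph{independent of} $R$ in order to exploit the tail smallness of $j$, after which the $R^{-2}$ factor absorbs the finite contribution from $B_M$.
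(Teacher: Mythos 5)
Your proof is correct and follows essentially the same route as the paper: both split the integral into a region near the origin, where $\int |z|^2 j(z)\,dz$ is finite and killed by the factor $R^{-2}$, and an outer region, where the bound $|z|^2\le R^2$ reduces matters to the integrability of $j$ away from the origin guaranteed by \ref{J2}. The only difference is cosmetic: the paper handles the outer part via dominated convergence with the fixed threshold $1$, while you split at an $\varepsilon$-dependent radius $M$ and use tail smallness directly, which is just an elementary unpacking of the same convergence argument.
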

\begin{proof}
    For \( R > 1 \), decompose the integral over \( B_R \) as:
    \[
    \int_{B_R} |z|^2 j(z) \, dz = \int_{B_1} |z|^2 j(z) \, dz + \int_{B_R \setminus B_1} |z|^2 j(z) \, dz.
    \]
    By condition \ref{J2}, the first term is finite. Consequently,
    \[
    \lim_{R \to \infty} \frac{1}{R^2} \int_{B_1} |z|^2 j(z) \, dz = 0.
    \]
    For the second term, define \( u_R(z) := \frac{1}{R^2} \chi_{B_R \setminus B_1}(z) |z|^2 j(z) \). Observe that \( 0 \leq u_R(z) \leq \chi_{\mathbb{R}^n \setminus B_1}(z) j(z) \) for all \( z \in \mathbb{R}^n \), and \( u_R(z) \to 0 \) pointwise as \( R \to \infty \). 
    Since \( \chi_{\mathbb{R}^n \setminus B_1} j \in L^1(\mathbb{R}^n) \) because of assumption \ref{J2}, Lebesgue's Dominated Convergence Theorem implies:
    \[
    \lim_{R \to \infty}  \frac{1}{R^2} \int_{B_R \setminus B_1} |z|^2 j(z) \, dz= \lim_{R \to \infty} \int_{\mathbb{R}^n} u_R(z) \, dz = 0.
    \]
    Combining both limits completes the proof.
\end{proof}

Notice that, by choosing $\mathcal N=\mathbb S^2\subseteq\mathbb R^3, K=j, \psi(p,q)=|p-q|^2$ and $\mathcal P_\Omega=\mathcal W_\Omega$, the functional $\mathcal I_\Omega$ defined in \eqref{func:total} is exactly $\mathcal I_\Omega=\mathcal E_\Omega$. Thus, Theorem~\ref{thm:existence} directly provides the following existence result.

\begin{thm}[Existence of magnetic equilibrium configurations]\label{thm:existence-magn} 
Assume that the kernel $j$ satisfies \ref{J1}--\ref{J3}. 
Then, the micromagnetic energy $\mathcal{E}_\Omega$ admits a minimizer in $\mathcal X^j(\Omega;\mathbb S^2)$.

\end{thm}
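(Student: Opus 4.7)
The plan is to derive Theorem~\ref{thm:existence-magn} as an immediate consequence of the abstract existence Theorem~\ref{thm:existence}. Specifically, I would set $n = \ell = 3$, $\mathcal{N} = \mathbb{S}^2 \subseteq \mathbb{R}^3$, $K(x,y) = j(x-y)$, $\psi(p,q) = |p-q|^2$, and $\mathcal{P}_\Omega = \mathcal{W}_\Omega$. With these choices, the bounds \eqref{eq:control_psi} and \eqref{eq:control_kappa} are trivially satisfied with $\Lambda = 1$; the spaces $\mathcal{X}^K(\Omega;\mathbb{S}^2)$ and $\mathcal{X}^j(\Omega;\mathbb{S}^2)$ coincide; and the functional $\mathcal{I}_\Omega$ reduces exactly to $\mathcal{E}_\Omega$.

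The only nontrivial step is to verify the two properties that $\mathcal{P}_\Omega = \mathcal{W}_\Omega$ must satisfy in order to apply Theorem~\ref{thm:existence}: boundedness from below and continuity with respect to the strong topology of $L^2(\Omega;\mathbb{R}^3)$. Boundedness below is immediate from \eqref{term:magnetostatic}, which yields $\mathcal{W}_\Omega(m) = \|h_d[m]\|_{L^2(\mathbb{R}^3;\mathbb{R}^3)}^2 \geq 0$. For strong $L^2$-continuity, I would take a sequence $m_k \to m$ in $L^2(\Omega;\mathbb{S}^2)$; extending by zero outside $\Omega$, one has $m_k\chi_\Omega \to m\chi_\Omega$ in $L^2(\mathbb{R}^3;\mathbb{R}^3)$. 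Since $h_d$ is a bounded linear operator on $L^2(\mathbb{R}^3;\mathbb{R}^3)$ (as recalled in Section~\ref{subs:magnetostatic-energy}), it follows that $h_d[m_k] \to h_d[m]$ in $L^2(\mathbb{R}^3;\mathbb{R}^3)$, and hence $\mathcal{W}_\Omega(m_k) \to \mathcal{W}_\Omega(m)$ by \eqref{term:magnetostatic}.

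With both hypotheses verified, Theorem~\ref{thm:existence} directly produces a minimizer of $\mathcal{E}_\Omega$ in $\mathcal{X}^j(\Omega;\mathbb{S}^2)$, and the proof is complete. The main analytical content---the strong $L^2$-precompactness of energy-bounded sequences of $\mathbb{S}^2$-valued maps under the minimal assumptions \ref{J1}-\ref{J3}, in particular the nonintegrability condition \ref{J3}---has already been absorbed into Theorem~\ref{thm:Compactness}; so the expected obstacle is not in this theorem itself but was overcome earlier, when establishing compactness in the nonconvex constrained setting. Here, the argument is essentially a bookkeeping identification of the micromagnetic functional as a particular instance of the abstract functional $\mathcal{I}_\Omega$, together with the classical continuity of the magnetostatic self-energy, which requires no regularity of $\Omega$ since $h_d[m]$ is defined via the Maxwell-Ampère system \eqref{eq:Maxwell} for any $m \in L^2(\Omega;\mathbb{R}^3)$.
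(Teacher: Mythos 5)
Your proposal is correct and coincides with the paper's own argument: the paper likewise obtains Theorem~\ref{thm:existence-magn} by choosing $\mathcal N=\mathbb S^2$, $K=j$, $\psi(p,q)=|p-q|^2$, and $\mathcal P_\Omega=\mathcal W_\Omega$ in Theorem~\ref{thm:existence}, relying on the non-negativity and strong $L^2$-continuity of the magnetostatic self-energy recorded in Section~\ref{subs:magnetostatic-energy}. Your explicit verification of that continuity via the boundedness of the linear operator $h_d$ and \eqref{term:magnetostatic} is exactly the justification the paper invokes.
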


As a direct corollary of Theorem~\ref{thm:existence} we can also investigate the existence of minimizers in the presence of additional contributions to the micromagnetic functional. Specifically, we consider the \textit{anisotropy energy}
\begin{equation}\label{eq:anisotropy}
\mathcal{A}_{\Omega}(m):= \int_{\Omega} \varphi(m)\, dx, 
\end{equation}
where $\varphi: \Sf^2 \to \R_{+}$ is a Lipschitz function with some preferred vanishing directions, and the \textit{nonlocal antisymmetric exchange energy} \begin{equation}\label{eq:nonloc-DMI}
   \mathcal{D}_{\Omega}(m) := \iint_{\Omega \times \Omega} \mu(x-y) \cdot (m(x) \times m(y)) \, dx \,dy,
\end{equation}
where $\mu \in L^1(\R^3; \R^3)$ is an odd vector-valued interaction kernel, i.e.\ $ \mu(-z)=-\mu(z)$ for every $z\in\R^3$. The functional $\mathcal{D}_{\Omega}$ represents the nonlocal counterpart of the {\textit{Dzyaloshinskii-Moriya interaction energy}} (see~\cite{davoli2024bourgain} for formal justification) which arises in certain ferromagnetic crystals with broken inversion symmetry.

\begin{cor}[Existence in the presence of the anisotropy energy and the antisymmetric exchange]\label{Cor:anisotropy+DMI}
Assume that the kernel $j$ satisfies \ref{J1}--\ref{J3}. 
Then, the total micromagnetic energy  
\begin{equation*}
    \mathcal{J}_{\Omega}(m) + \mathcal{W}_{\Omega}(m) + \mathcal{A}_{\Omega}(m) + \mathcal{D}_{\Omega}(m)
\end{equation*}
admits a minimizer in $ \mathcal X^j(\Omega; \Sf^2)$.
\end{cor}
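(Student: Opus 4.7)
The plan is to recognize that the total energy has exactly the form of the abstract functional $\mathcal{I}_\Omega$ treated in Theorem~\ref{thm:existence}, and then apply that theorem directly. Concretely, I take $n=\ell=3$, $\mathcal{N}=\mathbb{S}^2$, $K=j$, $\psi(p,q)=|p-q|^2$, so that $\mathcal{F}_\Omega=\mathcal{J}_\Omega$, and I set
\[
\mathcal{P}_\Omega(m) \;:=\; \mathcal{W}_\Omega(m) + \mathcal{A}_\Omega(m) + \mathcal{D}_\Omega(m),
\]
viewed as a functional on $L^2(\Omega;\mathbb{S}^2)$. The whole task then reduces to showing that $\mathcal{P}_\Omega$ is bounded from below and continuous with respect to the strong $L^2$-topology; the existence of a minimizer for $\mathcal{J}_\Omega+\mathcal{P}_\Omega$ in $\mathcal{X}^j(\Omega;\mathbb{S}^2)$ then follows immediately from Theorem~\ref{thm:existence}.

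For the lower bounds, $\mathcal{W}_\Omega(m)=\int_{\mathbb{R}^3}|h_d[m]|^2\,dx\geq 0$ by \eqref{term:magnetostatic}, $\mathcal{A}_\Omega(m)\geq 0$ since $\varphi\geq 0$, and for the Dzyaloshinskii--Moriya term, the pointwise bound $|m(x)\times m(y)|\leq 1$ (as $m$ is $\mathbb{S}^2$-valued) together with $\mu\in L^1(\mathbb{R}^3;\mathbb{R}^3)$ yields $|\mathcal{D}_\Omega(m)|\leq \|\mu\|_{L^1}|\Omega|$, so $\mathcal{D}_\Omega$ is bounded on $L^2(\Omega;\mathbb{S}^2)$. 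Hence $\mathcal{P}_\Omega$ is bounded from below.

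For the continuity, $\mathcal{W}_\Omega$ is continuous on $L^2$ because $h_d$ is a bounded linear operator on $L^2(\mathbb{R}^3;\mathbb{R}^3)$ (see Section~\ref{subs:magnetostatic-energy}), so $\mathcal{W}_\Omega$ is a continuous quadratic form. For $\mathcal{A}_\Omega$, the Lipschitz character of $\varphi$ on $\mathbb{S}^2$ gives $|\mathcal{A}_\Omega(m)-\mathcal{A}_\Omega(\tilde m)|\leq L\,|\Omega|^{1/2}\|m-\tilde m\|_{L^2}$. The most delicate part, although still routine, is the continuity of $\mathcal{D}_\Omega$: using the algebraic identity $m(x)\times m(y)-\tilde m(x)\times\tilde m(y)=(m(x)-\tilde m(x))\times m(y)+\tilde m(x)\times(m(y)-\tilde m(y))$ together with $|m|,|\tilde m|\leq 1$, one gets
\[
|\mathcal{D}_\Omega(m)-\mathcal{D}_\Omega(\tilde m)|\;\leq\;\int_\Omega\!\!\int_\Omega |\mu(x-y)|\bigl(|m(x)-\tilde m(x)|+|m(y)-\tilde m(y)|\bigr)\,dx\,dy,
\]
and an application of Fubini together with Young's convolution inequality bounds this by $2\|\mu\|_{L^1(\mathbb{R}^3)}\|m-\tilde m\|_{L^1(\Omega)}$, which in turn is controlled by $\|m-\tilde m\|_{L^2(\Omega)}$ since $\Omega$ is bounded. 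Thus $\mathcal{P}_\Omega$ is strongly continuous on $L^2(\Omega;\mathbb{S}^2)$, and Theorem~\ref{thm:existence} applies to conclude.

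The only genuine subtlety is ensuring that $\mathcal{D}_\Omega$ fits the abstract framework despite its sign-indefiniteness; this is handled by the $L^\infty$ bound coming from the $\mathbb{S}^2$-constraint, which turns the kernel assumption $\mu\in L^1$ into both a uniform lower bound and strong $L^2$-continuity. All other ingredients are immediate.
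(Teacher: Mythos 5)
Your proposal is correct and follows essentially the same route as the paper: reduce to Theorem~\ref{thm:existence} by showing that $\mathcal{P}_\Omega=\mathcal{W}_\Omega+\mathcal{A}_\Omega+\mathcal{D}_\Omega$ is bounded below and strongly $L^2$-continuous, with the only delicate point being the Lipschitz estimate for $\mathcal{D}_\Omega$ via the $\mathbb{S}^2$-constraint and $\mu\in L^1$. The paper's treatment of $\mathcal{D}_\Omega$ differs only cosmetically (it uses the antisymmetry of $\mu$ to collapse the two cross-product difference terms into one, rather than your Fubini argument), so the two proofs are essentially identical.
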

\begin{proof}
By Theorem~\ref{thm:existence}, it suffices to prove that the perturbation term
\[
\mathcal{P}_{\Omega}(m) := \mathcal{W}_{\Omega}(m) + \mathcal{A}_{\Omega}(m) + \mathcal{D}_{\Omega}(m), \quad m \in L^2(\Omega;\Sf^2),
\]
is continuous with respect to the strong $L^2(\Omega;\Sf^2)$ topology and bounded from below so that it can be viewed as a continuous perturbation of $\mathcal{J}_{\Omega}$.

We treat each term separately. The micromagnetic self-energy $\mathcal{W}_{\Omega}$ is both bounded and continuous by the properties detailed in Section~\ref{subs:magnetostatic-energy}. The anisotropy energy $\mathcal{A}_{\Omega}$ is nonnegative, and its continuity follows directly from the Lipschitz continuity of the anisotropy density $\varphi$. 
For the antisymmetric exchange term $\mathcal{D}_{\Omega}$, we first note the estimate
\[
\left|\mathcal{D}_{\Omega}(m)\right| \le \|\mu\|_{L^1(\R^3;\R^3)} \|m\|^2_{L^2(\Omega;\Sf^2)} \le |\Omega| \, \|\mu\|_{L^1(\R^3;\R^3)},
\]
where we have used the fact that $|m(x)|=1$ almost everywhere in $\Omega$. Moreover, for any $m_1,m_2\in L^2(\Omega;\Sf^2)$, the integrability of $\mu$ yields
\begin{align*}
\left|\mathcal{D}_{\Omega}(m_1) - \mathcal{D}_{\Omega}(m_2)\right|
&\le \iint_{\Omega\times\Omega} |\mu(x-y)|\, |m_1(x)\times m_1(y) - m_2(x)\times m_2(y) |\,dx\,dy\\[1mm]
&\le 2 \iint_{\Omega\times\Omega} |\mu(x-y)|\,|m_1(x)|\,|m_1(y)-m_2(y)|\,dx\,dy\\[1mm]
&\le 2\,|\Omega|^{\frac{1}{2}} \|\mu\|_{L^1(\R^3;\R^3)}\, \|m_1 - m_2\|_{L^2(\Omega;\Sf^2)}.
\end{align*}
In the second inequality, we have used the antisymmetry of $\mu$, and then applied standard estimates. This shows that $\mathcal{D}_{\Omega}$ is Lipschitz continuous for the strong $ L^2$-topology.
Collecting these estimates, we conclude that $\mathcal{P}_{\Omega}$ is a bounded-from-below, $L^2(\Omega;\Sf^2)$-continuous perturbation of $\mathcal{J}_{\Omega}$, which completes the proof.
\end{proof}

We now return to the functional \(\mathcal{E}_\Omega\) defined in \eqref{eq:mainfunJpW}, which comprises solely the nonlocal exchange term \(\mathcal{J}_\Omega\) (see \eqref{eq:intro_exchange}) and the magnetostatic self‐energy \(\mathcal{W}_\Omega\) (see \eqref{term:magnetostatic}). The subsequent Theorems~\ref{thm:small_bodies} and \ref{thm:large_bodies} characterize the minimizers of \(\mathcal{E}_\Omega\) as a function of the size of the domain \(\Omega\). In our analysis, we focus on spherical domains by setting \(\Omega = B_R\), where \(B_R\) denotes the ball of radius \(R > 0\) centered at the origin. We examine the qualitative behavior of the minimizers for both small and large values of \(R\). Extensions to the case in which \(\Omega\) is a triaxial ellipsoid can be obtained with minor modifications following \cite{di2012generalization}; however, we do not address that case further here.

The proofs of the next two results (Theorems~\ref{thm:small_bodies} and~\ref{thm:large_bodies}) are provided in Section \ref{Sec:proofs-mag}.
\begin{thm}[Small bodies] \label{thm:small_bodies}
Assume that the kernel $j$ satisfies \ref{J1},\ref{J2} and \ref{J5}.
Then there exists a critical radius $ R^{*} = R^{*}(j) \in(0,R_0/2)$ such that, for $
    R \leq R^{*}$,
every minimizer $m_* \in \mathcal X^j(B_R; \Sf^2)$ of the energy $ \mathcal{E}_{B_R}$ defined in \eqref{eq:mainfunJpW} is constant. 
\end{thm}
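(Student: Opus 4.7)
The plan is to compare $\mathcal E_{B_R}(m)$, for an arbitrary $m\in\mathcal X^j(B_R;\Sf^2)$, with the energy of a constant configuration $c\in\Sf^2$. Since $\mathcal J_{B_R}(c)=0$ and, by classical magnetostatics, $h_d[c]=-c/3$ inside $B_R$, one has $\mathcal E_{B_R}(c)=\mathcal W_{B_R}(c)=|B_R|/3$, independently of the chosen $c\in\Sf^2$. Existence of a minimizer is guaranteed by Theorem~\ref{thm:existence-magn}, so it is enough to show that $\mathcal E_{B_R}(m)>|B_R|/3$ whenever $m$ is non-constant, provided $R$ is small enough.

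The central step is to expand $\mathcal W_{B_R}$ around the spatial average $\bar m:=\frac{1}{|B_R|}\int_{B_R}m(x)\,dx$. The self-adjointness of the quadratic form $\mathcal W_{B_R}$ on $L^2(B_R;\R^3)$, combined with the fact that $-h_d[\bar m]=\bar m/3$ is \emph{constant} inside $B_R$, makes the cross term $\int_{B_R}(-h_d[\bar m])\cdot(m-\bar m)\,dx$ vanish by the very definition of the average. This yields the exact identity
\begin{equation*}
\mathcal W_{B_R}(m)=\mathcal W_{B_R}(\bar m)+\mathcal W_{B_R}(m-\bar m)\geq \mathcal W_{B_R}(\bar m)=\frac{|\bar m|^{2}\,|B_R|}{3}.
\end{equation*}
Together with the elementary identity $|B_R|\,(1-|\bar m|^{2})=\|m-\bar m\|_{L^2(B_R)}^{2}$ (which uses $|m|=1$ a.e.), this gives $\mathcal W_{B_R}(m)-|B_R|/3\geq -\tfrac{1}{3}\|m-\bar m\|_{L^2(B_R)}^{2}$.

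For the exchange term, Assumption~\ref{J5} gives $j(z)\geq C/|z|^{3+2s}$ on $B_{R_0}$; restricting to $R\leq R_0/2$ guarantees $|x-y|\leq R_0$ for all $x,y\in B_R$, so
\begin{equation*}
\mathcal J_{B_R}(m)\geq C \iint_{B_R\times B_R}\frac{|m(x)-m(y)|^{2}}{|x-y|^{3+2s}}\,dx\,dy.
\end{equation*}
The fractional Poincaré inequality on balls, which by the scaling $x\mapsto x/R$ delivers $\|u-(u)_{B_R}\|_{L^2}^{2}\leq C'(n,s)\,R^{2s}\,[u]_{H^s(B_R)}^{2}$, then yields $\mathcal J_{B_R}(m)\geq (C^\ast/R^{2s})\,\|m-\bar m\|_{L^2(B_R)}^{2}$ for some $C^\ast>0$ depending only on $n$, $s$, and the constant $C$ of \ref{J5}. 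Adding the two lower bounds produces
\begin{equation*}
\mathcal E_{B_R}(m)-\mathcal E_{B_R}(c)\geq\left(\frac{C^\ast}{R^{2s}}-\frac{1}{3}\right)\|m-\bar m\|_{L^2(B_R)}^{2}.
\end{equation*}
Choosing $R^\ast\in(0,R_0/2)$ so small that $C^\ast/(R^\ast)^{2s}>1/3$ finishes the argument: for $R\leq R^\ast$, any minimizer $m_\ast$ must satisfy $\|m_\ast-\bar m_\ast\|_{L^2(B_R)}=0$, and the constraint $|m_\ast|=1$ a.e.\ forces $\bar m_\ast\in\Sf^2$.

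The hard part is purely quantitative: the whole theorem rests on the $R^{2s}$ scaling in the fractional Poincaré inequality on balls, which is what allows the exchange contribution to dominate, as $R\to 0^+$, the negative term $-\tfrac{1}{3}\|m-\bar m\|^{2}$ coming from the expansion of $\mathcal W_{B_R}$ around its average. The orthogonal decomposition of $\mathcal W_{B_R}$ is a special feature of the ball, stemming from the fact that $h_d$ sends constant magnetizations to constants inside $B_R$; a generalization to ellipsoids would amount to replacing the demagnetizing factor $1/3$ by the smallest eigenvalue of the demagnetizing tensor, along the lines of \cite{di2012generalization}.
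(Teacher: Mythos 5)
Your argument is correct, and its quantitative core is the same as the paper's: both proofs hinge on (i) the magnetostatic lower bound $\mathcal W_{B_R}(m)\ge \tfrac{1}{3}|B_R|\,|\langle m\rangle_{B_R}|^2$, (ii) the identity $\|m-\langle m\rangle_{B_R}\|_{L^2(B_R)}^2=|B_R|\,(1-|\langle m\rangle_{B_R}|^2)$ coming from the constraint $|m|=1$ a.e., and (iii) a Poincar\'e-type inequality whose constant decays like $R^{2s}$ thanks to \ref{J5}. The packaging differs. The paper first proves the standalone Theorem~\ref{prop:exist-constanti}: assuming only \ref{J4} and the condition $C_R<3$ on the constant of Lemma~\ref{lemma:Poincaré}, a minimizing-sequence argument yields existence of a minimizer \emph{without any compactness} and shows that all minimizers are constant; Theorem~\ref{thm:small_bodies} then follows by checking $C_R<3$ for small $R$ via the bound $Q_{B_R}\ge C\,2^{-3-2s}R^{-3-2s}$. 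You instead take a minimizer as given and run a direct energy comparison: you obtain (i) through the orthogonal decomposition $\mathcal W_{B_R}(m)=\mathcal W_{B_R}(\bar m)+\mathcal W_{B_R}(m-\bar m)$, which is equivalent to the paper's use of \ref{P1}--\ref{P2} with $u=\langle m\rangle_{B_R}\chi_{B_R}$, and you obtain (iii) from the scaled fractional Poincar\'e inequality on balls rather than from Lemma~\ref{lemma:Poincaré} combined with the essential-infimum estimate --- both give the same $R^{2s}$ scaling, and your fractional Poincar\'e inequality can even be proved by the same elementary Jensen argument as Lemma~\ref{lemma:Poincaré}. One small point: when you invoke Theorem~\ref{thm:existence-magn} for existence you should note explicitly that \ref{J5} implies \ref{J3} (otherwise its hypotheses \ref{J1}--\ref{J3} are not verified under your assumptions); alternatively, observe that existence is not strictly needed for the statement ``every minimizer is constant,'' or argue as the paper does, which has the added benefit of producing existence without compactness. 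In short: your route is shorter and more transparent once a minimizer is available, while the paper's detour through Theorem~\ref{prop:exist-constanti} yields the additional, compactness-free existence and rigidity statement under \ref{J4} alone.
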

\begin{thm}[Large bodies]\label{thm:large_bodies}
Assume that the kernel $j$ satisfies 
\ref{J1}--\ref{J3}.
Then there exists a critical radius $ R^{**} = R^{**}(j) > 0 $ such that if $
    R \geq R^{**}$, 
every minimizer $m_* \in \mathcal X^j(B_R; \Sf^2)$ of the energy $ \mathcal{E}_{B_R}$ defined in \eqref{eq:mainfunJpW} is non-constant. 
\end{thm}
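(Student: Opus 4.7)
The plan is a direct comparison argument: exhibit a non-constant Lipschitz competitor $m_R \in \mathcal{X}^j(B_R; \Sf^2)$ whose total energy strictly undercuts that of every uniform state for $R$ large. Because $\mathcal{J}_{B_R}$ vanishes on constants while the uniform state pays a magnetostatic price of order $R^3$, the comparison will go through provided the exchange of the competitor grows strictly slower than $R^3$ — which is precisely what Lemma~\ref{lemma:j6} delivers.

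First, for any constant $e \in \Sf^2$ the classical demagnetizing-factor computation for the ball ($h_d[e\chi_{B_R}] = -\tfrac{1}{3}e$ inside $B_R$) gives
\[
\mathcal{E}_{B_R}(e) \;=\; 0 \;+\; \mathcal{W}_{B_R}(e) \;=\; \tfrac{|B_R|}{3} \;=\; \tfrac{4\pi}{9}R^3.
\]
For the competitor I would start from the pure vortex $\varphi_0(x) = |x_\perp|^{-1}(-x_2,x_1,0)$, $x_\perp = (x_1,x_2)$, on $B_1$: a direct computation yields $\operatorname{div}\varphi_0 = 0$ inside $B_1$ and $\varphi_0\cdot n = 0$ on $\partial B_1$, so $h_d[\varphi_0\chi_{B_1}]\equiv 0$ and $\mathcal{W}_{B_1}(\varphi_0)=0$. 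Since $\varphi_0$ is singular on the $x_3$-axis, I would replace it by a smooth axisymmetric vortex $\varphi := \varphi_\delta \in W^{1,\infty}(B_1;\Sf^2)$ that coincides with $\varphi_0$ outside a tubular $\delta$-neighborhood of the axis and rotates continuously toward $\pm e_3$ inside; such a $\varphi_\delta$ has Lipschitz constant $L_\delta = O(1/\delta)$ and converges to $\varphi_0$ in $L^2(B_1)$ as $\delta\to 0$. Since the magnetostatic operator is bounded linear on $L^2$, the functional $m\mapsto\mathcal{W}_{B_1}(m)$ is $L^2$-continuous, so $\mathcal{W}_{B_1}(\varphi_\delta) \to 0$; I fix $\delta$ small enough to guarantee $\mathcal{W}_{B_1}(\varphi) < \tfrac{4\pi}{9}$.

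Next, define the rescaled competitor $m_R(x):=\varphi(x/R)$ on $B_R$. The scale invariance of the demagnetizing operator under the dilation $x\mapsto x/R$ yields $h_d[m_R\chi_{B_R}](x) = h_d[\varphi\chi_{B_1}](x/R)$, and a change of variables gives $\mathcal{W}_{B_R}(m_R) = R^3\,\mathcal{W}_{B_1}(\varphi)$. For the nonlocal exchange, the Lipschitz bound $|m_R(x)-m_R(y)| \le (L_\delta/R)|x-y|$ together with the substitution $z = x-y$ produces
\[
\mathcal{J}_{B_R}(m_R) \;\le\; \frac{L_\delta^2}{R^2}\iint_{B_R\times B_R}|x-y|^2 j(x-y)\,dx\,dy \;\le\; \frac{L_\delta^2\,|B_R|}{R^2}\int_{B_{2R}}|z|^2 j(z)\,dz.
\]
By Lemma~\ref{lemma:j6} (applied with $2R$ in place of $R$), $R^{-2}\int_{B_{2R}}|z|^2 j(z)\,dz\to 0$ as $R\to\infty$, hence $\mathcal{J}_{B_R}(m_R) = o(R^3)$. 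Combining, $\mathcal{E}_{B_R}(m_R) = R^3\,\mathcal{W}_{B_1}(\varphi) + o(R^3) < \tfrac{4\pi}{9}R^3 = \mathcal{E}_{B_R}(e)$ for all $R$ above some threshold $R^{**} = R^{**}(j)$. Any minimizer must therefore have energy strictly below every uniform configuration and so cannot be constant.

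The main obstacle is Step~2: producing a genuine Lipschitz $\Sf^2$-valued test field $\varphi$ with $\mathcal{W}_{B_1}(\varphi)$ strictly less than the uniform value $\tfrac{4\pi}{9}$. The underlying picture (a flux-closed vortex has vanishing stray field) is classical in micromagnetics, but transforming it into a bona fide $W^{1,\infty}$ map into $\Sf^2$ requires a careful axisymmetric smoothing of the axial singularity; the strict inequality is then obtained via the $L^2$-continuity of the demagnetizing operator rather than by a direct surface-charge computation. Once the competitor is in hand, the remaining bounds reduce to routine changes of variables and to the decay supplied by Lemma~\ref{lemma:j6}.
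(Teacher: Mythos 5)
Your proposal is correct and follows the same overall strategy as the paper (compare a vortex-like competitor against the uniform state, whose energy is $\tfrac{4\pi}{9}R^3$; show the competitor's exchange energy is $o(R^3)$ via Lemma~\ref{lemma:j6}; conclude that no constant can minimize for large $R$), but the two key estimates are executed differently. For the magnetostatic gap, the paper works with an explicit smooth vortex, $m_{\bullet,1}=-x_2\sqrt{2-|x_\perp|^2}$, $m_{\bullet,2}=x_1\sqrt{2-|x_\perp|^2}$, $m_{\bullet,3}=1-|x_\perp|^2$, and proves in Lemma~\ref{Lemma-Magnetos}, via the surface-charge representation \eqref{new:mag} (using $\operatorname{div}m=0$) and a reflection argument reducing the comparison to $x_3^4y_3^4\le x_3^2y_3^2$ on $\Sf^2_+\times\Sf^2_+$, the exact identity $\mathcal{W}_{B_R}(m)-\mathcal{W}_{B_R}(\sigma)=-c_2R^3$; you instead take the flux-closed singular vortex $\varphi_0=\hat e_\phi$ (zero stray field, since $\varphi_0\chi_{B_1}$ is distributionally divergence-free --- note this does require observing that no divergence concentrates on the codimension-two axis --- and tangent to $\partial B_1$), smooth its core in a $\delta$-tube by escaping to $e_3$, and invoke the strong $L^2$-continuity of $\mathcal{W}$ to get $\mathcal{W}_{B_1}(\varphi_\delta)<\tfrac{4\pi}{9}$ for $\delta$ small. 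Your route is softer and avoids the explicit computation, at the cost of only sketching the Lipschitz $\Sf^2$-valued core construction (standard, with $L_\delta=O(1/\delta)$, so this is not a gap) and of yielding a non-explicit constant in place of the paper's quantitative $c_2$. For the exchange term, you use the Lipschitz bound $|m_R(x)-m_R(y)|\le (L_\delta/R)|x-y|$ directly, whereas the paper rescales and uses the $H^1$ translation estimate together with a Sobolev extension of $m_\bullet$; both give a bound of the form $CR^3\cdot R^{-2}\int_{B_{2R}}|z|^2 j(z)\,dz=o(R^3)$, so this difference is cosmetic. In short: same skeleton, a genuinely different (approximation-based rather than computational) proof of the magnetostatic comparison, and an equally valid, slightly more elementary exchange estimate.
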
 
Theorem~\ref{thm:small_bodies} follows as a direct corollary of a more general result, stated below, whose proof is provided in Section~\ref{Sec:proofs-mag}.
\begin{thm}\label{prop:exist-constanti}
Consider $j:\R^3\to[0,+\infty]$ a Borel-measurable function, finite almost everywhere, and satisfying \ref{J4}.
Let $C_R:=1/{(\QBR|B_R|)}> 0$ be the constant of the Poincaré-type inequality \eqref{Ineq:Poincaré}.
If $C_R < 3$, then the energy $\mathcal{E}_{B_R}$ defined in \eqref{eq:mainfunJpW} has a minimizer in $\mathcal X^j(B_R; \Sf^2)$. Moreover, the minimizers are exactly the constant functions. 
\end{thm}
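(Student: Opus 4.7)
The plan is to establish directly that $\mathcal{E}_{B_R}(m) \geq |B_R|/3$ for every $m \in \mathcal{X}^j(B_R;\Sf^2)$, with equality if and only if $m$ is (a.e.) a constant $\Sf^2$-valued field. Since constant configurations trivially lie in $\mathcal{X}^j(B_R;\Sf^2)$ (they have vanishing $\mathcal{J}_{B_R}$-energy), this simultaneously yields existence of a minimizer and identifies the full minimizer set, bypassing the general compactness result of Theorem \ref{thm:Compactness}. The three ingredients I would combine are: the classical expression for the demagnetizing field of a uniformly magnetized ball, an orthogonal decomposition of $\mathcal{W}_{B_R}$ into mean and fluctuation parts, and the Poincaré-type inequality \eqref{Ineq:Poincaré}.

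First, for any constant $m_0 \in \Sf^2$ I would use the classical identity $h_d[m_0](x) = -m_0/3$ on $B_R$ (obtained from the Maxwell system recalled in Section \ref{subs:magnetostatic-energy}) to get $\mathcal{J}_{B_R}(m_0) = 0$ and $\mathcal{W}_{B_R}(m_0) = |B_R|/3$, so that $\mathcal{E}_{B_R}(m_0) = |B_R|/3$. Then, for a general $m \in \mathcal{X}^j(B_R;\Sf^2)$, I would set $\bar m := |B_R|^{-1}\int_{B_R} m\,dx$ and $u := m - \bar m$. Expanding the quadratic form $\mathcal{W}_{B_R}(\bar m + u)$ via its associated bilinear form and exploiting both the self-adjointness of $h_d$ on $L^2(\R^3;\R^3)$ and the identity $h_d[\bar m] = -\bar m/3$ in $B_R$, the cross term reduces to a multiple of $\bar m \cdot \int_{B_R} u\,dx = 0$. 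This yields the orthogonal splitting
\begin{equation*}
\mathcal{W}_{B_R}(m) \;=\; \tfrac{|B_R|}{3}|\bar m|^2 \;+\; \mathcal{W}_{B_R}(u).
\end{equation*}

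The final step couples this decomposition with the pointwise saturation $|m|=1$ a.e., which through the standard variance identity gives $\|u\|^2_{L^2(B_R)} = |B_R|(1 - |\bar m|^2)$. Dropping the nonnegative term $\mathcal{W}_{B_R}(u)$ and invoking the Poincaré inequality $\|u\|^2_{L^2(B_R)} \le C_R\,\mathcal{J}_{B_R}(m)$, I would reach
\begin{equation*}
\mathcal{E}_{B_R}(m) \;\geq\; \tfrac{|B_R|}{3} \;+\; \Big(1 - \tfrac{C_R}{3}\Big)\mathcal{J}_{B_R}(m).
\end{equation*}
Under the standing assumption $C_R < 3$, the coefficient in front of $\mathcal{J}_{B_R}(m)$ is strictly positive, so the lower bound $\mathcal{E}_{B_R}(m) \geq |B_R|/3$ holds with equality precisely when $\mathcal{J}_{B_R}(m) = 0$. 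Finally, assumption \ref{J4} ensures $\QBR > 0$, which forces $m(x) = m(y)$ for a.e.\ pair $(x,y) \in B_R \times B_R$, i.e.\ $m$ is a.e.\ constant.

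I expect the only delicate point to lie in the orthogonal splitting of $\mathcal{W}_{B_R}$: once the cross term $\int_{B_R} h_d[\bar m]\cdot u\,dx$ is recognized as the scalar product of a constant vector with a zero-mean quantity, the remainder of the argument becomes a purely algebraic energy comparison. The essential use of the spherical symmetry of $\Omega = B_R$ enters only through the constancy of $h_d[\bar m]$ inside the ball; this is precisely why the argument cannot be directly exported to non-spherical shapes without the more refined demagnetizing-factor analysis of \cite{di2012generalization}.
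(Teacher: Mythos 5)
Your argument is correct, and it relies on the same core ingredients as the paper's proof: the uniform single-domain property \ref{P2} for the ball, the positive semidefiniteness of $-h_d$ (equivalently the magnetostatic inequality \ref{P1}), the variance identity coming from $|m|=1$ a.e., and the Poincar\'e-type inequality \eqref{Ineq:Poincaré} with $C_R<3$. Where you differ is in how the conclusion is reached. The paper bounds $\mathcal{W}_{B_R}(m)\geq \tfrac{1}{3}|B_R|\,|\langle m\rangle_{B_R}|^2$ via \ref{P1} with the comparison field $u=\langle m\rangle_{B_R}\chi_{B_R}$ and then runs a minimizing-sequence argument: an $\varepsilon$-estimate against the constant competitor forces $\|m_h-\langle m_h\rangle_{B_R}\|_{L^2}\to 0$, a subsequence converges to a constant of unit length, Fatou plus the $L^2$-continuity of $\mathcal{W}_{B_R}$ gives that this constant is a minimizer, and the trick of feeding an arbitrary minimizer back in as a constant minimizing sequence shows all minimizers are constant. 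You instead prove the exact orthogonal splitting $\mathcal{W}_{B_R}(m)=\tfrac{|B_R|}{3}|\bar m|^2+\mathcal{W}_{B_R}(m-\bar m)$ (the cross term vanishes because $h_d$ of a uniform field is constant in the ball and $m-\bar m$ has zero mean --- this is just the "equality" version of the paper's \ref{P1}-based bound), drop the nonnegative fluctuation term, and arrive at the pointwise-in-$m$ inequality
\begin{equation*}
\mathcal{E}_{B_R}(m)\;\geq\;\frac{|B_R|}{3}+\Bigl(1-\frac{C_R}{3}\Bigr)\mathcal{J}_{B_R}(m),
\end{equation*}
which, together with $\mathcal{E}_{B_R}(\sigma)=|B_R|/3$ for constants and the fact that \ref{J4} forces $\mathcal{J}_{B_R}(m)=0\Rightarrow m$ constant, pins down the minimum value and the full minimizer set in one stroke. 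Your route is more direct and even slightly more informative in one respect (it gives an explicit energy gap $(1-C_R/3)\,\mathcal{J}_{B_R}(m)$ above the ground state), whereas the paper's sequence-based version additionally records that every minimizing sequence converges, up to subsequences, strongly in $L^2$ to a constant; both establish the theorem as stated.
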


\begin{remark}[On Assumption \ref{J4} in Theorem \ref{thm:small_bodies}]
Hypothesis \ref{J4} is required to establish a Poincaré-type inequality for the nonlocal exchange term \(\mathcal{J}_{\Omega}\) in \eqref{eq:intro_exchange}, where the associated constant explicitly depends on the size of the domain (see Lemma \ref{lemma:Poincaré} for further details). This inequality plays a fundamental role in the proof of Theorem \ref{thm:small_bodies}, as outlined in Section~\ref{Sec:proofs-mag}.
\end{remark}

In general, the existence of minimizers in Theorem \ref{thm:existence-magn} relies on the kernel \( j \) satisfying conditions \ref{J1}--\ref{J3}. In particular, as highlighted in Remark \ref{rmk:non-integrable}, the non-integrability condition \ref{J3} is crucial to ensure strong compactness in \( L^2 \) (see Theorem \ref{thm:Compactness}). On the other hand, Theorem \ref{prop:exist-constanti} presents a scenario in which, even in the absence of assumptions \ref{J1}--\ref{J3}, the existence of minimizers can still be established by leveraging the size-dependent properties of \( \mathcal{E}_{B_R} \) rather than compactness arguments.

\begin{remark}[On the condition \( C_R < 3 \) in Theorem \ref{prop:exist-constanti}]
In Theorem \ref{prop:exist-constanti}, the specific choice of \( j \) significantly influences the range of radii for which minimizers remain constant, leading to a completely different scenario from the \textit{small bodies} regime. This variation arises from the fact that assumptions \ref{J1}--\ref{J3} are no longer imposed.  In particular, the kernel 
$j$ may no longer be singular at the origin.
For instance, if \( j \equiv 1 \), a minimizer exists and is necessarily constant whenever \( R > (4\pi)^{-\frac{1}{3}} \). We thus obtain that the minimizers are constant in the case of \textit{large bodies}. 

On the other hand, for a kernel of the form \( j(z) = 4e^{-|z|^2} \), the range of radii for which minimizers are constant is confined to a bounded interval, approximately \( R \in (0.28, 2.61) \).
\end{remark}

\begin{remark}[Example of the co-existence of both phenomena]
The co-existence of both phenomena, namely the \textit{small bodies regime} in Theorem \ref{thm:small_bodies} and the \textit{large bodies regime} in Theorem \ref{thm:large_bodies}, is certainly ensured for a kernel
$j: \R^3 \to [0, +\infty]$ that is a Borel-measurable, symmetric, and such that for some function $f\in L^1(\mathbb R^3\setminus B_1)$, constant $\lambda \geq 1$, and fractional indexes $s, \sigma \in (0,1)$, with $\sigma\geq s$, there holds
\begin{equation*}
    \frac{1}{\lambda} \frac{1}{|z|^{3+2s}} \bigchi_{B_1}(z) \leq j(z) \leq \lambda\frac{1}{|z|^{3+2\sigma}} \bigchi_{B_1}(z)+f(z)\bigchi_{\R^3 \setminus B_1}(z) \quad \textrm{for every } z \in \R^3,
\end{equation*}
\end{remark}

\begin{remark}[On Assumption \ref{J5} in Theorem \ref{thm:small_bodies}]
Although Assumption \ref{J5} may initially appear restrictive, it arises
naturally from the analytical requirements of Theorem \ref{thm:small_bodies}.
In particular, the proof requires that the exchange kernel $j$ satisfies two
critical conditions. First, one must have
\begin{equation}
    \essinf_{z \in B_{2 R}} j (z) > \frac{1}{4 \pi R^3}  \quad \text{for
   sufficiently small } R, \label{cond:rmk}
\end{equation}
and second, the Poincaré constant $C_R$ in the inequality
\begin{equation} \label{eq:PItemp1}
\|m - \langle m \rangle_{B_R} \|^2_{L^2 (B_R ; \mathbb{R}^3)} \leq C_R
   \iint_{B_R \times B_R} j (x - y)  \hspace{0.17em} |m (x) - m (y) |^2 
   \hspace{0.17em} dx \hspace{0.17em} dy 
\end{equation}
must decay as $R \to 0$. Kernels of the form $j (z) = |z|^{- 3}$ fail to
satisfy this latter requirement. Indeed, a simple scaling argument shows that
the inequality \eqref{eq:PItemp1} can be reformulated as
\[ \|m - \langle m \rangle_{B_R} \|^2_{L^2 (B_R ; \mathbb{R}^3)} \leq
   \frac{C_1}{R^3} \iint_{B_R \times B_R} j \hspace{-0.17em} \left( \frac{x -
   y}{R} \right)  \hspace{0.17em} |m (x) - m (y) |^2  \hspace{0.17em} dx
   \hspace{0.17em} dy, \]
which clearly indicates that with $j (z) = |z|^{- 3}$ the Poincaré constant
becomes $0$-homogeneous (i.e., $C_R = C_1$ is independent of $R$).
Consequently, $|z|^{- 3}$ does not provide the necessary decay in singularity.
Combining this with \eqref{cond:rmk}, we understand that $j$ must satisfy $j (z)
\geq c |z|^{- 3 - 2 s}$ for $R < R_0$, where $c > 0$ and $s \in (0, 1)$. The
exponent $2 s$ strengthens the singularity, while the condition $s < 1$ preserves $j$ as a
Lévy kernel, ensuring compatibility with conditions \ref{J1}--\ref{J3}.
Of course, this argument is a heuristic motivation rather than a rigorous necessity
proof for Assumption \ref{J5},  since condition \eqref{cond:rmk} is obtained by means of an explicit Poincaré constant which
 is not the optimal one.
\end{remark}

\section{Compactness and existence of minimizers (proofs of
Theorem~\ref{thm:Compactness} and Theorem~\ref{thm:existence})}\label{Sec:cmpt ed exist}
The proof of Theorem \ref{thm:Compactness} is an extension of~\cite[Theorem 1.1]{jarohs2020weth}. 
We first state a few technical lemmas.

\begin{lemma}\label{lem:esteso}
Assume that the kernel $j$ satisfies the Assumptions \ref{J1}-\ref{J2}. Let $m\in \mathcal{X}^j(\Omega; \R^\ell)$, i.e., let $m\in L^2(\Omega; \R^\ell)$ be such that
    \begin{equation*}
        \mathcal{J}_{\Omega}( m) = \iint_{\Omega\times\Omega}j(x-y)|m(x)-m(y)|^2 \, dx\, dy<+\infty.
    \end{equation*}
    Also, assume that $m$ vanishes outside an open set and is compactly contained in $\Omega$, i.e., that there exists an open set $\Omega'\Subset\Omega$ such that $m=0$ almost everywhere in $\Omega\setminus\Omega'$. Then, for the extension $\tilde{m}$ defined by
    \begin{equation*}
        \tilde{m}\coloneqq  \begin{cases} m & \textrm{in } \Omega, \\ 0 &  \textrm{in } \R^n\setminus\Omega, \end{cases}  
    \end{equation*}
    the following estimate holds
    \begin{equation*}
        \mathcal{J}_{\R^n}(\tilde{m}) \leq \mathcal{J}_{\Omega}(m) +C\,L_j\,\|m\|_{L^2(\Omega)}^2,
    \end{equation*}
    for some constant $C=C(\Omega',\Omega)>0$, and with $L_j$ being the constant defined by the Lévy-type condition in Assumption \ref{J2}.
\end{lemma}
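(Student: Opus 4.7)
The strategy is a direct decomposition of the double integral over $\R^n\times\R^n$, exploiting the fact that $\tilde m$ vanishes outside $\Omega'$ (and in particular outside $\Omega$), combined with a Lévy-type tail estimate that converts a bare integral of $j$ over $\{|z|\geq d\}$ into something controlled by $L_j$.

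First I would split $\R^n\times\R^n$ into the four pieces $\Omega\times\Omega$, $\Omega\times\Omega^c$, $\Omega^c\times\Omega$, and $\Omega^c\times\Omega^c$. On the last piece $\tilde m$ vanishes identically, so the integrand is zero. On the two cross pieces $\tilde m(y)=0$ (resp. $\tilde m(x)=0$), and by the symmetry $j(-z)=j(z)$ the two contributions coincide. Since $m$ vanishes on $\Omega\setminus\Omega'$, the $x$-integration in the cross term is effectively over $\Omega'$ only. This yields the identity
\begin{equation*}
\mathcal{J}_{\R^n}(\tilde m)=\mathcal{J}_{\Omega}(m)+2\int_{\Omega'}|m(x)|^2\left(\int_{\Omega^c}j(x-y)\,dy\right)dx.
\end{equation*}

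Next, set $d:=\operatorname{dist}(\Omega',\R^n\setminus\Omega)$, which is strictly positive since $\Omega'\Subset\Omega$. For every $x\in\Omega'$ and $y\in\Omega^c$ we have $|x-y|\geq d$, so the change of variables $z=x-y$ gives
\begin{equation*}
\int_{\Omega^c}j(x-y)\,dy\leq\int_{\{|z|\geq d\}}j(z)\,dz.
\end{equation*}
The core of the estimate is to bound this tail integral by $L_j$ up to a constant depending only on $d$. The elementary inequality $1\leq\min\{1,|z|^2\}/\min\{1,d^2\}$ for $|z|\geq d$ (verified separately in the regimes $|z|\geq 1$ and $d\leq|z|<1$) yields
\begin{equation*}
\int_{\{|z|\geq d\}}j(z)\,dz\leq\frac{1}{\min\{1,d^2\}}\int_{\R^n}\min\{1,|z|^2\}\,j(z)\,dz=\frac{L_j}{\min\{1,d^2\}}.
\end{equation*}

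Plugging this into the previous identity and noting $\|m\|_{L^2(\Omega')}\leq\|m\|_{L^2(\Omega)}$ gives the desired bound with $C=2/\min\{1,d^2\}$, which depends only on $\Omega'$ and $\Omega$. The only mildly delicate step is the tail estimate, but it is a straightforward consequence of the definition of $L_j$ in assumption \ref{J2}; no deeper tool is needed, and assumption \ref{J1} enters only through the symmetrization of the cross terms.
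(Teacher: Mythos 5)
Your proof is correct and follows essentially the same route as the paper: the same decomposition of $\mathcal{J}_{\R^n}(\tilde m)$ isolating the cross terms supported in $\Omega'\times\Omega^c$, the same use of $d=\operatorname{dist}(\Omega',\partial\Omega)>0$, and the same type of tail estimate converting $\int_{\{|z|\ge d\}}j(z)\,dz$ into a multiple of $L_j$ (the paper bounds it by $\tfrac{1+d^2}{d^2}L_j$, you by $\tfrac{L_j}{\min\{1,d^2\}}$ — an immaterial difference in the constant).
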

\begin{remark}
From the proof will be evident that one can take, e.g., $C(\Omega', \Omega) =  \frac{2(1 + d^2)}{d^2}$ where $ d := \operatorname{dist}(\Omega',\partial\Omega).$
\end{remark}

\begin{proof}
    We only need to prove that
    \begin{equation*}
        \int_\Omega\bigg(\int_{\R^n\setminus\Omega}j(x-y)|m(x)|^2 \,dy\bigg)\, dx \leq C\,L_j\,\|m\|_{L^2(\Omega)}^2,
    \end{equation*}
    for some constant $C=C(\Omega',\Omega)>0$.
    For that, let $d:= \operatorname{dist}(\Omega',\partial\Omega)$ and notice that
    \begin{align*}
        \int_\Omega\bigg(\int_{\R^n\setminus\Omega}j(x-y)|m(x)|^2 \,dy\bigg)\, dx&=\int_{\Omega'}|m(x)|^2\bigg(\int_{\R^n\setminus\Omega} j(x-y)\,dy\bigg)\, dx\\
        &
        =\frac{1}{d^2}\int_{\Omega'}|m(x)|^2\bigg(\int_{\R^n\setminus\Omega}\min\{d^2,|x-y|^2\} \, j(x-y)\,dy\bigg)\, dx\\
        &
        \leq\frac{1 + d^2}{d^2}\int_{\Omega'}|m(x)|^2\bigg(\int_{\R^n}\min\{1,|z|^2\}\, j(z)\,dz\bigg)\, dx\\
        &
        \leq \frac{1 + d^2}{d^2}\, L_j\,\|m\|_{L^2(\Omega)}^2,
    \end{align*}
    with $L_j < + \infty$ from \ref{J2}. This concludes the proof.
\end{proof}

\begin{lemma}\label{lem:tagliato} Assume that the kernel $j$ satisfies Assumptions \ref{J1}-\ref{J2}.
     Let $\varphi\in C^{0,1}(\overline{\Omega})$ and let $m\in \mathcal{X}^j(\Omega; \R^\ell)$ i.e., let $m\in L^2(\Omega; \R^\ell)$ be such that
    \begin{equation*}
        \mathcal{J}_{\Omega}(m) = \iint_{\Omega\times\Omega} j(x-y)|m(x)-m(y)|^2\, dx\, dy<+\infty.
    \end{equation*}
     Then
    \begin{align*}
        \mathcal{J}_{\Omega}(\varphi m) \leq C \, (1 +  L_j) \,\|\varphi\|_{C^{0,1}(\bar{\Omega})}^2 \left(\|m\|_{L^2(\Omega)}^2 + \mathcal{J}_{\Omega}(m) \right)
    \end{align*}
    for some constant $C=C(\Omega)>0$, and with $L_j$ being the constant defined by the Lévy-type condition in Assumption \ref{J2}. Here we set
    \begin{equation*}
        \|\varphi\|_{C^{0,1}(\bar{\Omega})} := \|\varphi\|_{C^{0}(\bar{\Omega})} + [\varphi]_{C^{0,1}(\overline{\Omega})}, \quad  \text{with} \quad [\varphi]_{C^{0,1}(\overline{\Omega})} := \sup_{x,y \in \Omega,\, x \ne y} \frac{|\varphi(x) - \varphi(y)|}{|x-y|}.
    \end{equation*}
\end{lemma}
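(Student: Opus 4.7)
The plan is to expand the difference $\varphi(x)m(x)-\varphi(y)m(y)$ in a way that separates the variation of $m$ from the variation of $\varphi$, and then exploit the Lévy‐type condition \ref{J2} to control the piece involving $\varphi$. Specifically, I would use the algebraic identity
\[
\varphi(x)m(x)-\varphi(y)m(y)=\varphi(x)\bigl(m(x)-m(y)\bigr)+\bigl(\varphi(x)-\varphi(y)\bigr)m(y),
\]
and apply the elementary inequality $|a+b|^2\le 2|a|^2+2|b|^2$ to obtain
\[
\mathcal{J}_{\Omega}(\varphi m)\le 2\,\|\varphi\|_{C^{0}(\bar\Omega)}^2\,\mathcal{J}_{\Omega}(m)\;+\;2\iint_{\Omega\times\Omega}j(x-y)\,|\varphi(x)-\varphi(y)|^2\,|m(y)|^2\,dx\,dy.
\]
The first term already has the right form; the whole work is in estimating the second term.

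For the second term, the idea is that the Lipschitz bound is sharp for small displacements while the sup bound is sharp for large displacements, and both pieces combine to produce exactly the integrand $\min\{1,|z|^2\}\,j(z)$ appearing in $L_j$. More precisely, for $|x-y|\le 1$ I would use $|\varphi(x)-\varphi(y)|^2\le [\varphi]_{C^{0,1}(\bar\Omega)}^2\,|x-y|^2$, and for $|x-y|>1$ I would use the crude bound $|\varphi(x)-\varphi(y)|^2\le 4\,\|\varphi\|_{C^{0}(\bar\Omega)}^2$. Combining these two cases yields the uniform estimate
\[
|\varphi(x)-\varphi(y)|^2\le 4\,\|\varphi\|_{C^{0,1}(\bar\Omega)}^2\,\min\{1,|x-y|^2\}\qquad\text{for all }x,y\in\Omega.
\]
Plugging this into the double integral, performing the change of variables $z=x-y$ in the inner integral, and extending the $z$‐domain from $\Omega-y$ to $\R^n$, one gets
\[
\iint_{\Omega\times\Omega}j(x-y)\,|\varphi(x)-\varphi(y)|^2\,|m(y)|^2\,dx\,dy\le 4\,\|\varphi\|_{C^{0,1}(\bar\Omega)}^2\,L_j\,\|m\|_{L^2(\Omega)}^2,
\]
by definition of $L_j$ in \ref{J2}.

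Putting the two pieces together gives
\[
\mathcal{J}_{\Omega}(\varphi m)\le 2\,\|\varphi\|_{C^{0,1}(\bar\Omega)}^2\,\mathcal{J}_{\Omega}(m)+8\,L_j\,\|\varphi\|_{C^{0,1}(\bar\Omega)}^2\,\|m\|_{L^2(\Omega)}^2,
\]
which is bounded from above by $C(1+L_j)\,\|\varphi\|_{C^{0,1}(\bar\Omega)}^2\bigl(\|m\|_{L^2(\Omega)}^2+\mathcal{J}_{\Omega}(m)\bigr)$ for a suitable absolute constant $C$, as claimed. I expect no serious obstacle here: the only point requiring care is the simultaneous use of the Lipschitz bound on the short‐range part and the $L^\infty$ bound on the long‐range part, which is exactly what allows one to match the structure $\min\{1,|z|^2\}j(z)$ that the Lévy condition \ref{J2} is tailored to integrate. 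Note that the constant $C$ turns out to be independent of $\Omega$ itself (it is purely numerical), even though the statement allows dependence on $\Omega$.
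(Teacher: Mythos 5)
Your proof is correct and follows essentially the same route as the paper: the same splitting of $\varphi(x)m(x)-\varphi(y)m(y)$ into a term carrying $m(x)-m(y)$ and a term carrying $\varphi(x)-\varphi(y)$, followed by an application of the Lévy condition \ref{J2} to the latter. The only (harmless) difference is in that second term: you split $|x-y|\le 1$ from $|x-y|>1$, using the Lipschitz seminorm on the near range and the sup-norm on the far range so as to match $\min\{1,|z|^2\}\,j(z)$ exactly and obtain a purely numerical constant, whereas the paper applies the Lipschitz bound for all pairs and absorbs the far range using the boundedness of $\Omega$, which yields the $\Omega$-dependent constant $C(\Omega)=2\,(1+\operatorname{diam}^2(\Omega))$ noted in its remark.
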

\begin{remark}
From the proof, it will be evident that one can take, e.g., $C(\Omega)=2 \,(1 + \operatorname{diam}^2 (\Omega)). $
\end{remark}

\begin{proof}
First, we note that since $\Omega$ is bounded, for every $y \in \Omega$ there holds
\begin{align*}
  \int_\Omega|x-y|^2j(x-y)\,dx &= \int_{\Omega \cap \{ |x-y| \leq 1 \}} \min\{1,|x-y|^2\} j(x-y)\,dx  \\
  & \qquad\qquad\qquad \qquad + \int_{\Omega \cap \{ |x-y| > 1 \}} |x-y|^2j(x-y)\,dx \\
  &
  \leq \int_{\Omega \cap \{ |x-y| \leq 1 \}} \min\{1,|x-y|^2\} j(x-y)\,dx \\
  &  \qquad\qquad\qquad\qquad + \operatorname{diam}^2 (\Omega)  \int_{\Omega \cap \{ |x-y| > 1 \}} j(x-y)\,dx\\
  &\leq (1 + \operatorname{diam}^2 (\Omega) ) \int_{\Omega} \min\{1,|x-y|^2\} j(x-y)\,dx\\
  & \leq  L_j (1 + \operatorname{diam}^2 (\Omega)),  
\end{align*}
with $L_j < + \infty$ from \ref{J2}. Then,  we estimate $\mathcal{J}_{\Omega}(\varphi m)$ as follows
\begin{align*}
    \mathcal{J}_{\Omega}(\varphi m) &= \iint_{\Omega\times\Omega}j(x-y)|\varphi(x)m(x)-\varphi(y)m(y)|^2 \, dx\, dy\\
    &\leq 2\iint_{\Omega\times\Omega}j(x-y)|m(y)|^2|\varphi(x)-\varphi(y)|^2 \, dx\, dy \\
     & \qquad\qquad\qquad \qquad\qquad\qquad +
    2\iint_{\Omega\times\Omega}j(x-y)|\varphi(x)|^2|m(x)-m(y)|^2 \, dx\, dy\\
    &
    \leq2\,[\varphi]_{C^{0,1}(\overline{\Omega})}^2\int_\Omega|m(y)|^2\bigg(\int_\Omega|x-y|^2j(x-y)\,dx\bigg)\,dy
+2\,\|\varphi\|_{L^\infty(\Omega)}^2 \mathcal{J}_{\Omega}(m) 
\\
&
\leq 2\,(1 + \operatorname{diam}^2 (\Omega) )\,(1 + L_j) \, \|\varphi\|_{C^{0,1}(\bar{\Omega})}^2 \left(\|m\|_{L^2(\Omega)}^2 + \mathcal{J}_{\Omega}(m) \right)
\end{align*}
 with $L_j < + \infty$ from \ref{J2}. This proves the claim.
\end{proof}

\subsection*{Compactness argument: Proof of Theorem \ref{thm:Compactness} }\label{subsec:cmptc}
We are now in a position to prove our first main result.

\begin{proof}[Proof of Theorem \ref{thm:Compactness}]
By equations \eqref{eq:control_psi} and \eqref{eq:control_kappa}, we obtain
the uniform upper bound
\begin{equation}
  \sup_{h \in \mathbb{N}} \mathcal{J}_{\Omega} (m_h) < + \infty,
  \label{eq:unif-bound}
\end{equation}
and since $\mathcal{N}$ is compact, we also have
\begin{equation}
  {\sup_{h \in \mathbb{N}}}  \|m_h \|_{L^{\infty} (\Omega, \mathcal{N})} \leq
  c_{\mathcal{N}} \, \label{eq:Linfty_bound}
\end{equation}
for some constant $c_{\mathcal{N}}$ that depends only on the diameter of
$\mathcal{N}$ and its distance from the origin, e.g., $c_{\mathcal{N}} :=\text{diam} (\mathcal{N}) + \text{dist}(0,\mathcal{N})$. Let $(\Omega_k)_{k \in \mathbb{N}}$ be an exhaustion of $\Omega$ by relatively
compact open sets, meaning that $\Omega_k \Subset \Omega_{k + 1} \Subset
\Omega$ for every $k \in \mathbb{N}$ and $\cup_{k \in \mathbb{N}} \Omega_k =
\Omega$. We denote by $(\varphi_k)_k \subseteq C^{\infty}_c (\Omega)$ the
associated family of cut-off functions: for every $k \in \mathbb{N}$,
\[ 0 \leq \varphi_k \leq 1, \qquad \varphi_k \equiv 1 \textrm{ in } \Omega_k
   \qquad \mathrm{and} \qquad \varphi_k \equiv 0 \textrm{ in } \Omega
   \setminus \overline{\Omega_{k + 1}} . \]
For the proof, we will use the following scheme. For fixed $k \in
\mathbb{N}$
and an increasing sequence of natural numbers $(\nu (k, h))_{h \in
\mathbb{N}}$
we define the sequence of functions $(\tilde{m}_{\nu (k, h)})_{h \in
\mathbb{N}}$ as
\[ \tilde{m}_{\nu (k, h)} := \left\{ \begin{array}{ll}
     m_{\nu (k, h)} \varphi_k & \textrm{in } \Omega,\\
     0 & \textrm{in } \R^n \setminus \Omega .
   \end{array} \right. \]
Note that, by construction, $\tilde{m}_{\nu (k, h)} | \Omega_k = m_{\nu (k,
h)} | \Omega_k$. By first applying Lemma~\ref{lem:tagliato} and then
Lemma~\ref{lem:esteso}, we derive the estimate
\begin{align}
  \mathcal{J}_{\R^n} (\tilde{m}_{\nu (k, h)}) & \leq  \mathcal{J}_{\Omega} 
  (m_{\nu (k, h)} \varphi_k) + C \, L_j  \| \varphi_k \|_{C^{0, 1}
  (\bar{\Omega})}^2 \hspace{0.17em} \|m_{\nu (k, h)} \|_{L^2 (\Omega)}^2
  \nonumber\\
  & \leq  C \, (1 + L_j) \hspace{0.17em} \| \varphi_k \|_{C^{0, 1}
  (\bar{\Omega})}^2  \left( 2 \hspace{0.17em} \|m_{\nu (k, h)} \|_{L^2
  (\Omega)}^2 +\mathcal{J}_{\Omega} (m_{\nu (k, h)}) \right), \nonumber
\end{align}
for some $C = C (\Omega, \mathcal{N}, k) > 0$. Hence, from
\eqref{eq:unif-bound} and \eqref{eq:Linfty_bound}, we obtain that for any
fixed $k \in \mathbb{N}$ the following uniform bound holds:
\begin{equation}
  \sup_{h \in \mathbb{N}} \mathcal{J}_{\R^n} (\tilde{m}_{\nu (k, h)}) < +
  \infty . \label{eq:unifestforfixedk}
\end{equation}
Thus, for each fixed $k \in \mathbb{N}$, we use the compactness of the embedding
$\mathcal{X}^{j} (\R^n ; \R^{\ell}) \hookrightarrow
L^2_{\mathrm{loc}}(\R^n ; \R^{\ell})$ (see~{\cite[Theorem
1.1]{jarohs2020weth}}, which trivially extends to our vectorial case), from which we infer the
existence of a subsequence $(\tilde{\nu} (k, h))_{h \in
\mathbb{N}}$
of $(\nu (k, h))_{h \in
\mathbb{N}}$
and a function $\tilde{m}_k \in L^2_{\mathrm{loc}} (\R^n ; \R^{\ell})$ such that
\[ \tilde{m}_{\tilde{\nu} (k, h)} \to \tilde{m}_k \quad \textrm{strongly in }
   L^2_{\mathrm{loc}} (\R^n ; \R^{\ell}) \textrm{ and a.{\hspace{0.17em}}e. in }
   \R^n . \]
In particular, since $\tilde{m}_{\tilde{\nu} (k, h)} | \Omega_k =
m_{\tilde{\nu} (1, h)} | \Omega_k$, we have $\tilde{m}_k \in L^{\infty}
(\Omega_k ; \mathcal{N})$.

We apply iteratively this argument, starting with $k = 1$ and $\nu (1, h) =
h$, i.e., we consider the functions
\[ \tilde{m}_{\nu (1, h)} := \left\{ \begin{array}{ll}
     m_{\nu (1, h)} \varphi_1 & \textrm{in } \Omega,\\
     0 & \textrm{in } \R^n \setminus \Omega .
   \end{array} \right. \]
We thus obtain a subsequence $(\tilde{\nu} (1, h))_{h \in
\mathbb{N}}$
extracted from $(\nu (1, h) = h)_{h \in
\mathbb{N}}$
and a function $\tilde{m}_1 \in L^2_{\mathrm{loc}} (\R^n ; \R^{\ell})$, whose restriction to $\Omega_1$ is $\mathcal{N}$-valued, such that
$\tilde{m}_{\tilde{\nu} (1, h)} \to \tilde{m}_1$ strongly in $L^2_{\mathrm{loc}}
(\R^n ; \R^{\ell})$ and a.e.~in $\R^n$. However, $\tilde{m}_{\tilde{\nu} (1,
h)} | \Omega_1 = m_{\tilde{\nu} (1, h)}$, so that we obtain a subsequence
$(m_{\tilde{\nu} (1, h)})_{h \in
\mathbb{N}}$
of $(m_h)_{h \in
\mathbb{N}}$
such that
\[ m_{\tilde{\nu} (1, h)} \to \tilde{m}_1 \quad \textrm{strongly in } L^2
   (\Omega_1; \mathcal{N}) \textrm{ and a.e. in } \Omega_1 . \]
Next, we set $\nu (2, h) := \tilde{\nu} (1, h)$ and we reapply the
argument to the sequence $(\tilde{m}_{\nu (2, h)})_{h \in \mathbb{N}}$. Again,
arguing as before, we obtain a subsequence $(\tilde{\nu} (2, h))_{h \in
\mathbb{N}}$
of $(\nu (2, h))_{h \in
\mathbb{N}}$
and a function $\tilde{m}_2 \in L^2_{\mathrm{loc}} (\R^n ; \R^{\ell})$, whose restriction to $\Omega_2$ is $\mathcal{N}$-valued, such that
$\tilde{m}_{\tilde{\nu} (2, h)} \to \tilde{m}_2$ strongly in $L^2 (\R^n ;
\R^{\ell})$ and a.e. in $\R^n$. However $\tilde{m}_{\tilde{\nu} (2, h)} |
\Omega_2 = m_{\tilde{\nu} (2, h)} | \Omega_2$, so that we obtain a subsequence
$(m_{\tilde{\nu} (2, h)})_{h \in
\mathbb{N}}$
of $(m_{\tilde{\nu} (1, h)})_{h \in
\mathbb{N}}$
such that
\[ m_{\tilde{\nu} (2, h)} \to \tilde{m}_2 \quad \textrm{strongly in } L^2
   (\Omega_2 ; \mathcal{N}) \textrm{ and a.e. in } \Omega_2 .
\]
Moreover, by the uniqueness of the limit, since $\Omega_1 \Subset \Omega_2$,
we have
\[ \tilde{m}_{\tilde{\nu} (2, h)} | \Omega_1 = m_{\tilde{\nu} (1, h)} |
   \Omega_1 . \]
Proceeding iteratively in $k \in
\mathbb{N}$,
we deduce the existence of further subsequences $(m_{\nu (k, h)})_{h \in
\mathbb{N}}$ of $(m_{\nu (1, h)})_{h \in \mathbb{N}} = (m_h)_{h \in
\mathbb{N}}$,
such that $(m_{\nu (k, h)})_{h \in \mathbb{N}}$ is extracted from $(m_{\nu (k
- 1, h)})_{h \in \mathbb{N}}$, as well as a sequence $(\tilde{m}_k)_{k \in
\mathbb{N}}$
of $L^2_{\mathrm{loc}} (\R^n ; \R^l)$-functions,  such that
\[ m_{\tilde{\nu} (k, h)} \to \tilde{m}_k \quad \textrm{strongly in } L^2
   (\Omega_k ; \mathcal{N}) \textrm{ and a.e. in } \Omega_k\, , 
   \]
and such that
\[ \tilde{m}_{\tilde{\nu} (k, h)} | \Omega_j = m_{\tilde{\nu} (j, h)} |
   \Omega_j \quad \text{for every } 1 \leqslant j \leqslant k. \]
The previous compatibility condition assures that is well-defined the function
$m (x) = \tilde{m}_k (x)$ if $x \in \Omega_k$, that $m$ is
$\mathcal{N}$-valued and that $(m_{\tilde{\nu} (h, h)})_{h \in
\mathbb{N}}$
converges a.e.\ to $m$ in $\Omega$. Therefore, by the uniform bound
\eqref{eq:Linfty_bound} and the Dominated Convergence Theorem, we can conclude
that the diagonal sequence $(m_{\tilde{\nu} (h, h)})_{h \in
\mathbb{N}}$
strongly converges to $m$ in $L^2 (\Omega)$. Indeed, for every $k \in
\mathbb{N}$
and for every $h \geqslant k$ we have
\begin{align}
  \int_{\Omega} |
  m_{\tilde{\nu} (h, h)} - m |^2 & = 
  \int_{\Omega_k} |
  m_{\tilde{\nu} (h, h)} - m |^2 +
  \int_{\Omega
  \setminus \Omega_k} |
  m_{\tilde{\nu} (h, h)} - m |^2 \nonumber\\
  & = 
  \int_{\Omega_k} |
  m_{\tilde{\nu} (k, h)} - m |^2 +
  \int_{\Omega
  \setminus \Omega_k} |
  m_{\tilde{\nu} (h, h)} - m |^2 \nonumber\\
  & \leqslant 
  \int_{\Omega_k} |
  m_{\tilde{\nu} (k, h)} - m |^2 + 2 c_{\mathcal{N}} |
  \Omega \backslash
  \Omega_k |, \nonumber
\end{align}
so that by first taking the limit superior as $h \rightarrow \infty$, and then
the limit as $k \rightarrow \infty$, we conclude $(m_{\tilde{\nu} (h, h)})_{h
\in
\mathbb{N}}$
strongly converges to $m$ in $L^2 (\Omega)$.

It remains to show that $m \in \mathcal{X}^{K}
(\Omega ;
\mathcal{N})$. To this end, we use Fatou's Lemma and \eqref{eq:unif-bound} to
obtain
\[ 
   \mathcal{F}_{\Omega}
   (m) \leqslant \liminf_{h \rightarrow \infty}
   \mathcal{F}_{\Omega}
   (m_{\tilde{\nu} (h, h)}) \leqslant \sup_{h \in
   \mathbb{N}}
   \mathcal{F}_{\Omega}
   (m_h) < + \infty . \]
Thus, $m \in \mathcal{X}^{K}
(\Omega ;
\R^l)$. Finally, as $m_{\tilde{\nu} (h, h)} \in \mathcal{X}^{K}
(\Omega ;
\mathcal{N})$ for every $h \in \mathbb N$, and $\mathcal{N}$ is compact, the almost everywhere convergence yields $m  \in \mathcal{X}^{K}
(\Omega ;
\mathcal{N})$, and this concludes the proof of the theorem.
\end{proof}


\subsection*{Existence argument: Proof of Theorem~\ref{thm:existence}}\label{subsec:existence}

\begin{proof}[Proof of Theorem \ref{thm:existence}]
Given the compactness result stated in Theorem~\ref{thm:Compactness}, in essence, it remains to show that
$\mathcal{F}_{\Omega}$ is lower semicontinuous on the space $\mathcal{X}^K (\Omega ;
\mathcal{N})$ endowed with the strong $L^2$ topology.

Let $(m_k)_{k \in \mathbb{N}} \subset \mathcal{X}^K (\Omega ; \mathcal{N})$ be
a minimizing sequence for the functional $\mathcal{I}_{\Omega}
=\mathcal{F}_{\Omega} +\mathcal{P}_{\Omega}$. Since $\mathcal{P}_{\Omega}$ is
bounded from below, there exists a constant $C > 0$ such that $\lvert
\mathcal{I}_{\Omega} (m_k) \rvert \leq C$ for all $k \in \mathbb{N}$. By
Theorem \ref{thm:Compactness}, there exists, after possibly passing to a
subsequence, an element $m_{\ast} \in \mathcal{X}^K (\Omega ; \mathcal{N})$
such that $m_k \to m_{\ast}$ strongly in $L^2 (\Omega ; \mathbb{R}^{\ell})$.
The continuity of $\psi$, together with Fatou's lemma, then yields
\[ \mathcal{F}_{\Omega} (m_{\ast}) \leq \liminf_{k \to \infty}
   \mathcal{F}_{\Omega} (m_k) . \]
Furthermore, by our hypothesis, the term $\mathcal{P}_{\Omega}$ is continuous
for the strong $L^2$ topology, so that we also have
\[ \mathcal{I}_{\Omega} (m_{\ast}) \leq \liminf_{k \to \infty}
   \mathcal{I}_{\Omega} (m_k) = \inf_{m \in \mathcal{X}^K (\Omega ;
   \mathcal{N})} \mathcal{I}_{\Omega} (m) \leq \mathcal{I}_{\Omega} (m_{\ast})
   . \]
This chain of inequalities shows that $m_{\ast}$ is indeed a minimizer of
$\mathcal{I}_{\Omega}$ in $\mathcal{X}^K (\Omega ; \mathcal{N})$, which
completes the proof.
\end{proof}

\section{Micromagnetic case (Proofs of Theorem \ref{thm:small_bodies}, Theorem \ref{thm:large_bodies} and Theorem~\ref{prop:exist-constanti})}\label{Sec:proofs-mag}
Before proceeding with the proofs, we state a few preliminary ingredients. 

In what follows, we will often refer to the following Poincaré-type inequality (see~\cite[Theorem 8.2]{2012hitchhiker}), whose proof is a straightforward application of Jensen inequality.
\begin{lemma}[Poincaré-type Inequality]\label{lemma:Poincaré} Let $\Omega \subseteq \R^3$ be an open bounded set and $d_\Omega:= \operatorname{diam}(\Omega)$. Consider  $j:\R^3\to[0,+\infty]$ a Borel-measurable function, finite almost everywhere, and such that \ref{J4} holds, i.e., that $\QOm : = \essinf_{|z|< \operatorname{diam}(\Omega)}  j(z) > 0$.
Then
\begin{equation}\label{Ineq:Poincaré}
  \| m -  \langle m \rangle_{\Omega} \|^{2}_{L^2(\Omega;\R^3)}  \leq \frac{1}{ \QOm  \, |\Omega|} \iint_{\Omega \times \Omega} j(x-y)|m(x)-m(y)|^2 \, d x \,d y,  
\end{equation}
for every function $m \in L^2(\Omega; \R^3).$
\end{lemma}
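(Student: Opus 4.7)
The plan is a direct application of Jensen's inequality combined with the essential lower bound \ref{J4} on $j$ over the diameter of $\Omega$.

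First I would rewrite the deviation from the mean using the definition $\langle m \rangle_\Omega = \frac{1}{|\Omega|}\int_\Omega m(y)\,dy$ as an average of differences, namely
\[
m(x) - \langle m \rangle_\Omega \;=\; \frac{1}{|\Omega|}\int_\Omega \bigl(m(x)-m(y)\bigr)\,dy,
\]
for almost every $x\in\Omega$. Applying Jensen's inequality with respect to the normalized measure $\frac{1}{|\Omega|}\,dy$ on $\Omega$ to the convex function $t\mapsto |t|^2$, I would deduce the pointwise bound
\[
\bigl|m(x) - \langle m \rangle_\Omega\bigr|^2 \;\leq\; \frac{1}{|\Omega|}\int_\Omega |m(x)-m(y)|^2\,dy.
\]

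Next I would integrate this inequality in $x$ over $\Omega$ and use Fubini's theorem to obtain
\[
\|m-\langle m\rangle_\Omega\|_{L^2(\Omega;\R^3)}^2 \;\leq\; \frac{1}{|\Omega|}\iint_{\Omega\times\Omega}|m(x)-m(y)|^2\,dx\,dy.
\]
At this point the kernel $j$ has not yet entered. To bring it in, observe that for every $x,y\in\Omega$ one has $|x-y|\leq d_\Omega$, so by the definition of $\QOm$ in \ref{J4} it holds that $j(x-y)\geq \QOm$ for almost every pair $(x,y)\in\Omega\times\Omega$. Since $\QOm>0$, this gives $1\leq j(x-y)/\QOm$ a.e., and therefore
\[
\iint_{\Omega\times\Omega}|m(x)-m(y)|^2\,dx\,dy \;\leq\; \frac{1}{\QOm}\iint_{\Omega\times\Omega} j(x-y)\,|m(x)-m(y)|^2\,dx\,dy.
\]
Combining the two displays yields exactly \eqref{Ineq:Poincaré}, concluding the proof.

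There is no real obstacle: the argument is a textbook computation, and \ref{J4} is tailored precisely so that the pairwise bound $j(x-y)\geq \QOm$ is available on the full product $\Omega\times\Omega$. The only minor point to notice is that the finite-a.e.\ assumption on $j$ together with $\QOm>0$ makes the final estimate meaningful (and otherwise the right-hand side is automatically $+\infty$ and there is nothing to prove).
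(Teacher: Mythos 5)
Your proof is correct and follows essentially the same route as the paper: Jensen's inequality (applied to the representation of $m-\langle m\rangle_\Omega$ as an average of differences) gives the mean-oscillation bound, and then the estimate $j(x-y)\geq \QOm$ for a.e.\ $(x,y)\in\Omega\times\Omega$, guaranteed by \ref{J4}, converts it into \eqref{Ineq:Poincaré}. The paper's proof is a condensed version of exactly this computation, writing $1=j(x-y)/j(x-y)$ and bounding $1/j$ by $1/\QOm$.
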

\begin{proof}
It is enough to observe that 
\begin{align*}
 \int_{\Omega} | m -  \langle m \rangle_{\Omega} |^2 \, dx  &\leq \frac{1}{|\Omega|} \iint_{\Omega \times \Omega} | m(x) -  m(y) |^2 dx \, dy \\ &= \frac{1}{|\Omega|} \iint_{\Omega \times \Omega} \frac{j(x-y)}{j(x-y)} | m(x) -  m(y) |^2 dx \, dy \\ &\leq \frac{1}{ \QOm  \, |\Omega|} \iint_{\Omega \times \Omega} j(x-y)|m(x)-m(y)|^2 \, d x \,d y.
\end{align*}
This completes the proof.
\end{proof}

We also need two fundamental properties of the demagnetizing field, which play a pivotal role in characterizing the minimizers of the micromagnetic functional \eqref{eq:mainfunJpW}.

\begin{enumerate}[label=(P\arabic*)]
    \item \label{P1} 
    (\textit{Magnetostatic Inequality})
    Let $\Omega \subseteq \R^3$ be an open set. For every $m, u \in L^2(\Omega; \R^3)$ vanishing outside of $\Omega$, the magnetostatic self-energy satisfies
   \begin{equation}\label{rel:W}
    - \int_{\Omega} h_d[m] \cdot m \, d x \geq -2 \int_{\Omega} h_d[u] \cdot m \, d x + \int_{\Omega} h_d[u] \cdot u \, d x\, .
    \end{equation}
   Equivalently, in terms of the bilinear form
   \begin{equation}
   \mathcal{W}_{\Omega}(m, u)=- \int_{\Omega} h_d[u] \cdot m \, d x \, ,
    \end{equation}
   we have that
\begin{equation}\label{rel:WW2}
\mathcal{W}_{\Omega}(m, m)+\mathcal{W}_{\Omega}(u, u)-2 \, \mathcal{W}_{\Omega}(u, m) \geqslant 0.
\end{equation}
The inequality \eqref{rel:W} follows from the bilinearity and symmetry of $\mathcal{W}_{\Omega}(m, u)$, which stem from the self-adjointness of $h_d$ in $L^2$ and the positive semidefiniteness of $-h_d$. Indeed, one has
$$
-\int_{\mathbb{R}^3} h_d[m-u] \cdot(m-u)\, dx =\mathcal{W}_{\Omega}(m, m)+\mathcal{W}_{\Omega}(u, u)-2 \,\mathcal{W}_{\Omega}(m, u),
$$
from which, rearranging terms, \eqref{rel:W} follows. Equality holds if, and only if, $m-u$ is in the kernel of $h_d$. 
For further details, see \cite[Theorem 2]{di2020variational} and \cite{doi:10.1137/0512046}.
\smallskip
    \item \label{P2}
    (\textit{Uniform Single-Domain Property})
    Let $\Omega  \subseteq \R^3$ be an ellipsoid. If $m \in L^2(\R^3; \R^3)$ is constant within $\Omega$, then $h_d[m]\in L^2(\R^3; \R^3)$ is also constant within $\Omega.$ In particular, if $\Omega$ is a ball of radius $R$ and $m$ is constant in $\Omega$, then
    \begin{equation}\label{eq:unsdp}
    h_d[m]\bigchi_{B_R} = -\frac{1}{3} \langle m \rangle_{B_R} \bigchi_{B_R}\qquad \text{and}\qquad \mathcal{W}_\Omega (m) = \frac{1}{3} \|m\|^2_{L^2_{\Omega}}.
    \end{equation}
    This result is a direct consequence of the quadratic behavior of the Newtonian potential of a uniform distribution of charges or masses. Comprehensive details can be found in \cite{di2016newtonian, osborn1945demagnetizing, stoner1945xcvii}.
\end{enumerate}

\subsection*{Existence in the absence of compactness: Proof of Theorem~\ref{prop:exist-constanti}}

\begin{proof}
In view of \eqref{rel:W} and \eqref{eq:unsdp}, let us choose $u :=
\langle m \rangle_{B_R} \chi_{B_R}$, where $m \in L^2 (B_R ; \R^3)$ is
arbitrary. We immediately deduce that
\begin{equation}
  \label{bound:W_pt2} \mathcal{W}_{B_R} (m) \geq \frac{1}{3} |B_R |
  \hspace{0.17em} | \langle m \rangle_{B_R} |^2 \, .
\end{equation}
Let $C_R := 1 / (\mathcal{Q}_R |B_R |) > 0$ denote the constant appearing
in the Poincaré-type inequality \eqref{Ineq:Poincaré}. Then, for any $m \in
\mathcal{X}^j (B_R ; \Sf^2)$, inequality \eqref{bound:W_pt2} yields the lower
bound
\begin{equation}
  \label{eq:lower_bd_pf} \mathcal{E}_{B_R} (m) \geq \frac{1}{C_R}
  \hspace{0.17em} \|m - \langle m \rangle_{B_R} \|^2_{L^2 (B_R ; \R^3)} +
  \frac{1}{3} |B_R | \hspace{0.17em} | \langle m \rangle_{B_R} |^2 .
\end{equation}
Now, let $(m_h)_h \subseteq \mathcal{X}^j (B_R ; \Sf^2)$ be a minimizing
sequence. For any $\varepsilon > 0$, there exists $h (\varepsilon) \in
\mathbb{N}$ such that for every $h \geq h (\varepsilon)$ the following estimate holds
\[ \mathcal{E}_{B_R} (m_h) \leq \frac{\varepsilon}{3} |B_R | + \inf_{m \in
   \mathcal{X}^j (B_R ; \Sf^2)} \mathcal{E}_{B_R} (m) \leq
   \frac{\varepsilon}{3} |B_R | + \hspace{0.17em} \mathcal{E}_{B_R} (\sigma) =
   \frac{1 + \varepsilon}{3} |B_R |\, , \]
   where $\sigma$ is some constant function in $B_R$. 
In the last inequality, we used \eqref{eq:unsdp}.
Substituting this estimate into \eqref{eq:lower_bd_pf} yields that for every $h \geq h (\varepsilon)$ there holds
\[ \frac{1}{C_R} \hspace{0.17em} \|m_h - \langle m_h \rangle_{B_R} \|^2_{L^2
   (B_R ; \R^3)} + \frac{1}{3} |B_R | \hspace{0.17em} | \langle m_h
   \rangle_{B_R} |^2 \leq \frac{1 + \varepsilon}{3} |B_R |, \]
and rearranging the terms, we obtain
\begin{equation}
  \label{bound:1_2} \frac{1}{C_R}  \hspace{0.17em} \|m_h - \langle m_h
  \rangle_{B_R} \|^2_{L^2 (B_R ; \R^3)} \leq \frac{1}{3} |B_R |  (1 - |
  \langle m_h \rangle_{B_R} |^2 + \varepsilon) \hspace{0.17em} .
\end{equation}
Due to the unitary-norm constraint $|m_h | \equiv 1$, it follows that
\begin{equation}
  \label{eq:2_2} \|m_h - \langle m_h \rangle_{B_R} \|^2_{L^2 (B_R ; \R^3)} =
  |B_R |  (1 - | \langle m_h \rangle_{B_R} |^2) .
\end{equation}
Thus, combining \eqref{bound:1_2} and \eqref{eq:2_2} we deduce that for every $h \geq h (\varepsilon)$
\[ \|m_h - \langle m_h \rangle_{B_R} \|^2_{L^2 (B_R ; \R^3)}  \left(
   \frac{1}{C_R} - \frac{1}{3} \right) \leq \frac{\varepsilon}{3} |B_R |\, . 
\]
Since by assumption $C_R < 3$, we conclude
that for every $h \geq h (\varepsilon)$
\[ \|m_h - \langle m_h \rangle_{B_R} \|^2_{L^2 (B_R ; \R^3)} \leq \frac{C_R}{3
   - C_R} |B_R |  \hspace{0.17em} \varepsilon, \]
which implies that
\[ \lim_{h \to \infty} \|m_h - \langle m_h \rangle_{B_R} \|^2_{L^2 (B_R ;
   \R^3)} = 0. \]
Now notice that since $|m_h | \equiv 1$, we have $| \langle m_h \rangle_{B_R}
| \leq 1$, i.e., the sequence $(\langle m_h \rangle_{B_R})_h$, takes values in
the unit ball of $\R^3$. Hence there exist a point $m_0 \in \mathbb{R}^3$,
with $|m_0 | \leq 1$, such that, maybe up to a subsequence, $\langle m_h
\rangle_{B_R} \to m_0$. Clearly, we have also
\[ \| \langle m_h \rangle_{B_R} - m_0 \|_{L^2 (B_R ; \mathbb{R}^3)}^2 = |B_R |
   \cdot | \langle m_h \rangle_{B_R} - m_0 |^2 \rightarrow 0. \]
Therefore, by the triangle inequality
\[ \|m_h - m_0 \|_{L^2 (B_R ; \R^3)} \leq \|m_h - \langle m_h \rangle_{B_R}
   \|_{L^2 (B_R ; \R^3)} + \| \langle m_h \rangle_{B_R} - m_0 \|_{L^2 (B_R ;
   \mathbb{R}^3)}, \]
we find that, up to a subsequence, $m_h \to m_0$ strongly in $L^2 (B_R ;
\mathbb{R}^3)$ and $m_h \to m_0$ almost everywhere in $B_R$. Since $|m_h |
\equiv 1$, this implies that $|m_0 | = 1$. Hence, the constant function $m_0$
belongs to the space $\mathcal{X}^j (B_R ; \mathbb{S}^2)$ and is thus an
admissible candidate for the minimization of $\mathcal{E}_{B_R}$. By Fatou's
Lemma and recalling that $\mathcal{W}_{B_R}$ is continuous with respect to the
strong $L^2$-convergence, we conclude that
\[ \inf_{m \in \mathcal{X}^j (B_R ; \Sf^2)} \mathcal{E}_{B_R} (m) \leq
   \mathcal{E}_{B_R} (m_0) \leq \liminf_{h \to \infty} \mathcal{E}_{B_R} (m_h)
   = \inf_{m \in \mathcal{X}^j (B_R ; \Sf^2)} \mathcal{E}_{B_R} (m) . \]
We have thus proved that there exists at least a minimizer of
$\mathcal{E}_{B_R}$ in $\mathcal{X}^j (B_R ; \Sf^2)$, which is given by a
constant function.

We are left to prove that any other minimizer must also be constant. For this,
it is enough to observe that if $\tilde{m} \in \mathcal{X}^j (B_R ;
\mathbb{S}^2)$ is a minimizer of $\mathcal{E}_{B_R}$, then we can choose the
constant sequence $m_h = \tilde{m}$ as a minimizing sequence. The above
argument then ensures that $\tilde{m}$ must be constant, thus concluding the
proof of the theorem.
\end{proof}

We can now derive  Theorem \ref{thm:small_bodies} as a consequence of Theorem~\ref{prop:exist-constanti}.

\subsection*{Small bodies case: Proof of Theorem \ref{thm:small_bodies}}
\begin{proof}
Under Assumption~\ref{J5}, there exist parameters $R_0 > 0$, $s \in (0, 1)$, and $C > 0$ such that the interaction kernel $j$ satisfies
\[ j (z) \geq \frac{C}{|z|^{3 + 2 s}} \quad \text{for all } |z| < R_0 . \]
For any radius $R \leq \frac{1}{2} R_0$, the kernel $j$ also satisfies
Assumption~\ref{J4} within the spherical domain $\Omega = B_R$. Specifically, the essential infimum $Q_{B_R}$ defined in \eqref{cond:inf} can be bounded from below as
\[ Q_{B_R} \geq \essinf_{z \in B_{2 R}} \frac{C}{|z|^{3 + 2 s}} =
   \frac{C}{2^{3 + 2 s}} \cdot \frac{1}{R^{3 + 2 s}} . \]
To apply Theorem~\ref{prop:exist-constanti}, we require the Poincaré constant \( C_R := \frac{1}{Q_{B_R} |B_R|} \) to satisfy \( C_R < 3 \). This condition is equivalent to  
\[
Q_{B_R} > \frac{1}{4\pi R^3}.
\]  
To guarantee this inequality, we define the critical radius  
\[
R^* := \min\left\{\frac{1}{2}R_0, \left(\frac{4\pi C}{2^{3+2s}}\right)^{1/2s}\right\}.
\]  
For all \( R \leq R^* \), the lower bound on \( Q_{B_R} \) yields  
\[
Q_{B_R} \geq \frac{C}{2^{3+2s}} \cdot \frac{1}{R^{3+2s}} > \frac{1}{4\pi R^3}.
\]  
This ensures \( C_R < 3 \), thereby satisfying the hypothesis of Theorem~\ref{prop:exist-constanti}. Consequently, all minimizers of the energy functional \( \mathcal{E}_{B_R} \) must be constant configurations. The critical radius \( R^* \) explicitly depends on the parameters \( R_0 \), \( C \), and \( s \) characterizing the kernel \( j \), reflecting the interplay between nonlocal exchange interactions and domain size in the small-body regime.  

This concludes the proof that uniform magnetization states are energetically favorable for spherical domains with radii below the critical threshold \( R^* \).
\end{proof}

\subsection*{Large bodies case: Proof of Theorem \ref{thm:large_bodies}}
We conclude with the proof of Theorem \ref{thm:large_bodies} by adopting an approach similar to the one used by Brown in~\cite[Sec.~5] {brown1969fundamental}, suitably modified to fit our total nonlocal framework.

Before proceeding with the proof, we state a preliminary lemma in which we demonstrate that vortex-type configurations are always energetically preferred by the stray field over constant configurations. 
In what follows, we denote by $(e_1, e_2, e_3)$ an orthonormal frame, and for every $x \in \mathbb{R}^3$ we write $x = x_{\perp} + x_3 e_3$, where $x_{\perp}$ is the projection of $x$ onto the plane spanned by $(e_1,e_2)$, i.e., $x_{\perp} = x_1 e_1 + x_2 e_2$.
\begin{lemma}\label{Lemma-Magnetos}
For every $R > 0$, consider the vortex-type configuration $$m = m_1 e_1 + m_2 e_2 + m_3 e_3$$ defined on $B_R$ by
\begin{equation}\label{m2}
    m_1(x) = -\frac{x_2}{R} \sqrt{2 - \frac{|x_{\perp}|^2}{R^2}}, \quad 
    m_2(x) = \frac{x_1}{R} \sqrt{2 - \frac{|x_{\perp}|^2}{R^2}}, \quad 
    m_3(x) = 1 - \frac{|x_{\perp}|^2}{R^2}.
\end{equation}
There exists a constant $c_2 > 0$ such that, for any constant configuration $\sigma \in L^2(B_R; \mathbb{S}^2)$, the following holds:
\begin{equation}\label{thesis}
    \mathcal{W}_{B_R}(m) - \mathcal{W}_{B_R}(\sigma) = -c_2 R^3,
\end{equation}
where $\mathcal{W}_{B_R}$ represents the magnetostatic self-energy, as defined in \eqref{eq:intro_magnetostatic}.
\end{lemma}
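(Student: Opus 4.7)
The plan is to exploit two structural properties of the vortex $m$ that make the magnetostatic energy essentially explicit. First, a direct computation from \eqref{m2} gives $|m|^2\equiv 1$ in $B_R$ (setting $t=|x_\perp|^2/R^2$, one has $|m|^2=t(2-t)+(1-t)^2=1$), so $m$ is $\Sf^2$-valued. Second, and crucially, $\mathrm{div}\,m\equiv 0$ in $B_R$: the only non-trivial contributions are $\partial_{x_1}m_1$ and $\partial_{x_2}m_2$, which equal $\pm x_1x_2/(R^3\sqrt{2-|x_\perp|^2/R^2})$ and cancel pointwise, while $\partial_{x_3}m_3=0$. Moreover, using the identity $x_3^2=R^2-|x_\perp|^2$ on $\partial B_R$ together with $m_1 x_1+m_2 x_2=0$, the normal trace reduces to the clean expression $m\cdot n=x_3^3/R^3$. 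In the representation \eqref{eq:Phi}, this means the potential $\Phi$ associated with $m$ is purely the Newtonian single-layer potential of the surface density $y_3^3/R^3$ on $\partial B_R$.

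Next I would reduce to the unit ball by a scaling argument. Writing $m_R(x)=m_1(x/R)$, where $m_1$ is the analogous vortex on $B_1$, the scaling properties of the Newtonian kernel in \eqref{eq:Phi} yield $\Phi_R(x)=R\,\Phi_1(x/R)$, and therefore $h_d[m_R](x)=h_d[m_1](x/R)$ and $\mathcal{W}_{B_R}(m)=R^3\,\mathcal{W}_{B_1}(m_1)$. Applying the same scaling to a constant field $\sigma$ and combining it with property \ref{P2} gives $\mathcal{W}_{B_R}(\sigma)=\tfrac{1}{3}|B_R|=\tfrac{4\pi R^3}{9}$, independently of the direction of $\sigma$. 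Consequently, the claim reduces to proving the strict inequality $\mathcal{W}_{B_1}(m_1)<\tfrac{4\pi}{9}$, after which one sets $c_2:=\tfrac{4\pi}{9}-\mathcal{W}_{B_1}(m_1)$; in particular, the independence of $c_2$ from both $R$ and $\sigma$ is automatic.

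To establish the strict inequality, I would compute $\mathcal{W}_{B_1}(m_1)$ via the surface-charge formula. Since $\mathrm{div}\,m_1=0$ in $B_1$, integrating by parts in $\mathcal{W}_{B_1}(m_1)=\int_{\R^3}|\nabla\Phi|^2$ reduces the energy to a Coulomb-type double surface integral on $\partial B_1\times\partial B_1$ with density $\cos^3\theta$. Using the Legendre decomposition $\cos^3\theta=\tfrac{3}{5}P_1(\cos\theta)+\tfrac{2}{5}P_3(\cos\theta)$ together with the spherical-harmonic expansion of $1/|y-z|$ on the sphere---which diagonalizes the corresponding quadratic form in the $\ell$-sectors with weights $4\pi/(2\ell+1)$---the integral collapses to a finite algebraic sum involving only the modes $\ell=1$ and $\ell=3$. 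The baseline $\tfrac{4\pi}{9}$ from \ref{P2} corresponds to the density $\cos\theta$ supported entirely in the $\ell=1$ sector, while the vortex transfers a nontrivial fraction of its mass to the $\ell=3$ sector, which is penalized by the strictly smaller weight $4\pi/7<4\pi/3$; a short arithmetic comparison then delivers $\mathcal{W}_{B_1}(m_1)<\tfrac{4\pi}{9}$, hence $c_2>0$. The main---essentially bookkeeping---obstacle is calibrating the prefactor implicit in \eqref{eq:Phi}: one must ensure that the normalization used in the paper is consistent with the identity $\mathcal{W}_{B_1}(e_3)=\tfrac{4\pi}{9}$ from \ref{P2}, so that the spherical-harmonic formula correctly reproduces this baseline on the density $\cos\theta$. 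Once this calibration is fixed, the explicit computation closes the argument, and the scaling step of the second paragraph promotes the inequality to the claimed identity $\mathcal{W}_{B_R}(m)-\mathcal{W}_{B_R}(\sigma)=-c_2 R^3$ for every $R>0$ and every unit constant $\sigma$.
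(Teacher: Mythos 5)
Your proposal is correct, and the reduction steps (checking $|m|\equiv 1$, $\operatorname{div}m=0$, the normal trace $m\cdot n=x_3^3/R^3$, the surface-charge representation of the energy, the scaling $\mathcal{W}_{B_R}(m)=R^3\,\mathcal{W}_{B_1}(m_\bullet)$, and $\mathcal{W}_{B_R}(\sigma)=\tfrac{4}{9}\pi R^3$ via \ref{P2}) coincide with the paper's. Where you genuinely diverge is the key positivity step. The paper never expands in spherical harmonics: it folds both double surface integrals onto $\Sf^2_+\times\Sf^2_+$ by reflecting in the equatorial plane, obtaining a common \emph{positive} kernel $1/\omega(x,y)$ against which the two densities are compared pointwise via $x_3^4y_3^4\le x_3^2y_3^2$ on $\Sf^2_+\times\Sf^2_+$; since both energies are computed with the same (unnormalized) surface formula, the sign of the difference is immediate and no calibration of the constant in \eqref{eq:Phi} is ever needed. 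Your route instead diagonalizes the single-layer quadratic form using $\cos^3\theta=\tfrac35 P_1+\tfrac25 P_3$ and the weights $4\pi/(2\ell+1)$; the arithmetic does close (in the unnormalized convention one gets $16\pi^2\cdot 53/1225$ for the vortex versus $16\pi^2/9$ for $e_3$), and it has the advantage of producing the explicit value of $c_2$ rather than mere positivity. Two caveats: the phrase ``transfers mass to the $\ell=3$ sector with smaller weight'' is not by itself conclusive --- you also need that the $\ell=1$ coefficient drops (or that $\|\cos^3\theta\|_{L^2(\Sf^2)}<\|\cos\theta\|_{L^2(\Sf^2)}$ combined with the bound of all weights by $4\pi/3$), which your explicit computation does supply; and the normalization issue you flag must indeed be fixed once, e.g.\ by calibrating the surface formula against $\mathcal{W}_{B_1}(e_3)=\tfrac{4\pi}{9}$ from \ref{P2}, or avoided altogether, as the paper does, by only ever comparing the two configurations within the same representation.
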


\begin{proof}
For any configuration $m^* \in C^1(\bar{B_R}; \Sf^2)$ such that $\operatorname{div}m^* = 0$, we obtain from Divergence Theorem that
\begin{equation}\label{eq:div}
    \mathcal{W}_{B_R}(m^*) = - \int_{B_R} m^*(x) \cdot (-\nabla \Phi(x))\, d x = \int_{\partial B_R}  \Phi(x) m^*(x) \cdot n \, d S(x),
\end{equation}
where, by \eqref{eq:Phi},
\begin{equation}\label{eq:Phi_m}
    \Phi(x) = \frac{1}{4\pi}  \int_{\partial B_R} \frac{m^*(y) \cdot n(y)}{|x - y|} d S (y), \quad \text{for every } x \in B_R.
\end{equation}
Therefore 
\begin{equation}\label{new:mag}
 \mathcal{W}_{B_R}(m^*) = \frac{1}{4\pi}  \iint_{\partial B_R \times \partial B_R} \frac{\left(m^*(x) \cdot n(x)\right) \left(m^*(y) \cdot n(y)\right)}{|x - y|} d S(y) \, d S(x).
\end{equation}
Consequently, the configuration $m$ in \eqref{m2} and any constant configuration satisfy the integral representation \eqref{new:mag}.
Additionally, by the \textit{Uniform Single-Domain Property} \ref{P2}, every configuration that is constant in $B_R$ has the same energy, i.e., for any $\sigma \in\Sf^2$ there holds
\begin{equation}
\mathcal{W}_{B_R}(\sigma) =  \frac{4}{9}  \pi R^3.  
\end{equation}
It is then sufficient to prove \eqref{thesis} for at least one constant configuration.
Consider $\sigma = e_3$, and from \eqref{new:mag} we infer that
\begin{align}\label{change1}
\begin{split}
  \mathcal{W}_{B_R}(e_3) &=  \frac{1}{4\pi}\iint_{\partial B_R \times \partial B_R} \frac{1}{R^2} \frac{x_3 y_3}{|x - y|} d S(y) \, d S(x) \\ & = \frac{R^3}{4\pi} \iint_{\Sf^2 \times \Sf^2} \frac{x_3 y_3}{|x - y|} d S(y) \, d S(x)  \\ & = R^3 \, \mathcal{W}_{B_1}(e_3), 
\end{split}
\end{align}
where in \eqref{change1} we applied the change of variables $(x,y) \mapsto (Rx, Ry)$ for $(x,y) \in B_R \times B_R$.

Looking now at the configuration $m$, we have
\begin{align}\label{change2}
  \mathcal{W}_{B_R}(m) &=\frac{1}{4\pi} \iint_{\partial B_R \times \partial B_R} \frac{\left(m(x) \cdot n(x)\right) \left(m(y) \cdot n(y)\right)}{|x - y|} d S(y) \, d S(x) \notag \\ 
  &= \frac{R^3}{4\pi}  \iint_{\Sf^2 \times \Sf^2} \frac{\left(m(Rx) \cdot n(Rx)\right) \left(m(Ry) \cdot n(Ry)\right)}{|x - y|} d S(y) \, d S(x) \notag \\ 
  &= \frac{R^3}{4\pi}  \iint_{\Sf^2 \times \Sf^2} \frac{\left(m_{\bullet}(x) \cdot n(Rx)\right) \left(m_{\bullet}(y) \cdot n(Ry)\right)}{|x - y|} d S(y) \, d S(x) \notag \\ 
  &= \frac{R^3}{4\pi}  \iint_{\Sf^2 \times \Sf^2} \frac{x^3_3 y^3_3}{|x - y|} d S(y) \, d S(x) \notag \\ 
  &= \mathcal{W}_{B_1}(m_{\bullet}),
\end{align}
where in \eqref{change2} we applied again the change of variables $(x,y) \mapsto (Rx, Ry)$ for $(x,y) \in B_R \times B_R$ and $m_{\bullet}$ is the rescaled configuration $m_{\bullet}(x ) = m(R x)$ for every $x \in B_1$ with coefficients 
\begin{equation}\label{coeff_mbul}
m_{\bullet, 1}(x)=-x_2 \sqrt{2-|x_{\perp}|^2}, \quad m_{\bullet, 2}(x)=x_1 \sqrt{2-|x_{\perp}|^2}, \quad m_{\bullet, 3}(x)=1-|x_{\perp}|^2. 
\end{equation}  Here $m_{\bullet}(x) \cdot n(Rx) = x_3 (1 - |x_{\perp}|^2) = x^3_3$ and, analogously, $m_{\bullet}(y) \cdot n(Ry) = y^3_3$.
We introduce
\begin{align}
    \Sf^2_+ = \{ \xi \in \Sf^2 : \xi_3 \geq 0  \} \quad \text{and} \quad \Sf^2_- = \{ \xi \in \Sf^2 : \xi_3 \leq 0 \},
\end{align}
and we decompose $\mathcal{W}_{B_1}(m_{\bullet})$ as follows:
\begin{align}\label{decompose}
  \mathcal{W}_{B_1}(m_{\bullet}) &= \frac{1}{4\pi} \iint_{\Sf^2 \times \Sf^2} \frac{x^3_3 y^3_3}{|x - y|} d S(y) \, d S(x)  \notag \\ 
  &= \frac{1}{4\pi}\int_{\Sf^2_+} \int_{\Sf^2} \frac{x^3_3 y^3_3}{|x - y|} d S(y) \, d S(x)   + \frac{1}{4\pi} \int_{\Sf^2_-} \int_{\Sf^2} \frac{x^3_3 y^3_3}{|x - y|} d S(y) \, d S(x) \notag \\ 
  &=  \frac{1}{4\pi} \int_{\Sf^2_+} \int_{\Sf^2} \frac{x^3_3 y^3_3}{|x - y|} d S(y) \, d S(x) \notag\\
  &\qquad \qquad\qquad +\frac{1}{4\pi}\int_{\Sf^2_+} \int_{\Sf^2} \frac{x^3_3 y^3_3}{|x_{\perp} - x_3 e_3 - y_{\perp} + y_3 e_3|} d S(y) \, d S(x) \notag \\ 
  &= \frac{2}{4\pi}  \int_{\Sf^2_+} \int_{\Sf^2} \frac{x^3_3 y^3_3}{|x - y|} d S(y) \, d S(x),
\end{align}
where we applied first the change of variable $(x_1,x_2, x_3 ) \mapsto (x_1, x_2, - x_3) =: \hat{x} $ for $x \in \Sf^2_-$ and then $(y_1,y_2, y_3 ) \mapsto (y_1, y_2, - y_3) =: \hat{y} $ for $y \in \Sf^2$. Note that $x_{\perp} - x_3 e_3 = \hat{x}$ and $y_{\perp} - y_3 e_3 = \hat{y}. $
We apply the same strategy for the second integral, and we infer
\begin{align}\label{decompose-2}
\begin{split}
\mathcal{W}_{B_1}(m_{\bullet})  &= \frac{2}{4\pi}   \int_{\Sf^2_+} \int_{\Sf^2} \frac{x^3_3 y^3_3}{|x - y|} d S(y) \, d S(x) \\    &=  \frac{2}{4\pi}\int_{\Sf^2_+} \int_{\Sf^2_+} \frac{x^3_3 y^3_3}{|x - y|} d S(y) \, d S(x)   + \frac{2}{4\pi}\int_{\Sf^2_+} \int_{\Sf^2_-} \frac{x^3_3 y^3_3}{|x - y|} d S(y) \, d S(x) \\ &=  \frac{2}{4\pi} \iint_{\Sf^2_+ \times \Sf^2_+} x^3_3 y^3_3 \left(\frac{1}{|x - y|} - \frac{1}{|x - y_{\perp} + y_3 e_3| } \right) d S(y) \, d S(x),
\end{split}
\end{align}
where in \eqref{decompose-2} for the second integral we applied $(y_1,y_2, y_3 ) \mapsto (y_1, y_2, - y_3)$ for $y \in \Sf^2_-$.
We then observe that
\begin{align}\label{frazione2}
 \frac{1}{|x - y|} - \frac{1}{|x - y_{\perp} + y_3 e_3| } = \frac{|x - y_{\perp} + y_3 e_3|^2 - |x - y|^2}{\operatorname{\omega}(x,y)} = \frac{4 x_3 y_3}{\operatorname{\omega}(x,y)},   
\end{align}
with 
\begin{equation*}
\operatorname{\omega}(x,y) := |x - y| \, |x - y_{\perp} + y_3 e_3| \left( |x - y_{\perp} + y_3 e_3| + |x - y|  \right). 
\end{equation*}
Finally, from \eqref{change2}, \eqref{decompose-2} and \eqref{frazione2}, we obtain that 
\begin{equation}
\mathcal{W}_{B_R}(m) = \frac{2 R^3}{\pi} \iint_{\Sf^2_+ \times \Sf^2_+} \frac{x^4_3 y^4_3}{\operatorname{\omega}(x,y)} d S(y) \, d S(x).
\end{equation}
Operating analogously for the constant configuration $\sigma = e_3$ in \eqref{change1}, we also infer
\begin{align}
\mathcal{W}_{B_R}(e_3) & = R^3 \, \mathcal{W}_{B_1}(e_3)  \notag \\
& = \frac{R^3}{4\pi} \iint_{\Sf^2 \times \Sf^2} \frac{x_3 y_3}{|x - y|} d S(y) \, d S(x)  \notag \\ 
& = \frac{2 R^3}{\pi}  \iint_{\Sf^2_+ \times \Sf^2_+} \frac{x^2_3 y^2_3}{\operatorname{\omega}(x,y)} d S(y) \, d S(x).
\end{align}
Since $(x,y) \in \Sf^2_+ \times \Sf^2_+ $, we have that $x^4_3 y^4_3 \leq x^2_3 y^2_3$ in  $(x,y) \in \Sf^2_+ \times \Sf^2_+ $ and therefore, by setting $c_2:=\mathcal{W}_{B_1}(e_3)- \mathcal{W}_{B_1}(m_{\bullet})>0$ we get \eqref{thesis}. This concludes the proof.
\end{proof}

We are now ready to prove Theorem  \ref{thm:large_bodies}.
 
\begin{proof}[Proof of Theorem \ref{thm:large_bodies}]

By Theorem \ref{thm:existence-magn}, we know that for any radius $R > 0$,
there exists at least one minimizer of the energy functional $\mathcal{E}_{B_R}$. Our objective is to demonstrate that for sufficiently large ferromagnetic particles, a non-uniform magnetization configuration
attains lower energy than any constant magnetization state.

To this end, we consider the vortex-type configuration introduced in Lemma
\ref{Lemma-Magnetos}. Specifically, we define the magnetization field $m : B_R
\to \mathbb{S}^2$ as
\begin{equation}
  m_1 (x) = - \frac{x_2}{R}  \sqrt{2 - \frac{|x_{\perp} |^2}{R^2}}, \quad m_2
  (x) = \frac{x_1}{R}  \sqrt{2 - \frac{|x_{\perp} |^2}{R^2}}, \quad m_3 (x) =
  1 - \frac{|x_{\perp} |^2}{R^2} .
\end{equation}
We will also work with the rescaled configuration
$m_{\bullet} : B_1 \to \mathbb{S}^2$, defined as
\begin{equation}\label{def:m_bullet 1}
m_{\bullet}(x ) = m(R x), \end{equation}
which explicitly reads as
\begin{equation}\label{def:m_bullet 3}
m_{\bullet, 1}(x)=-x_2 \sqrt{2-|x_{\perp}|^2}, \quad m_{\bullet, 2}(x)=x_1 \sqrt{2-|x_{\perp}|^2}, \quad m_{\bullet, 3}(x)=1-|x_{\perp}|^2 ,  
\end{equation}  
whose plot is represented in Figure~\ref{fig:mB1}.
\begin{figure}[t]
    \centering
    \includegraphics[width=\linewidth]{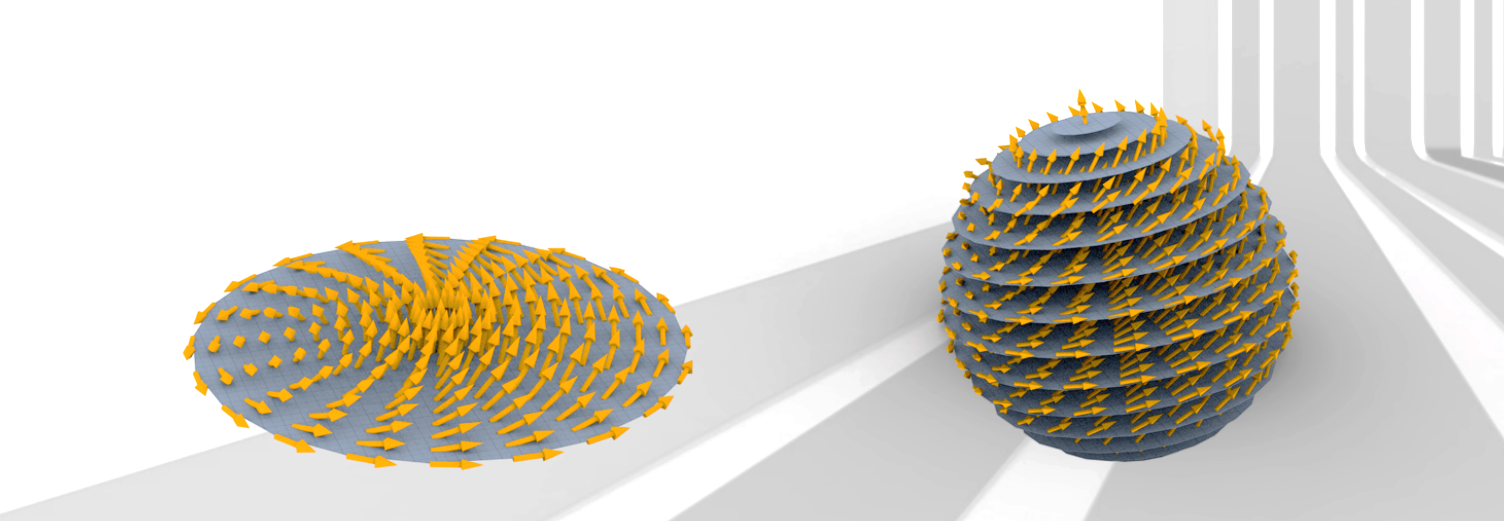}
    \caption{On the left, the rescaled configuration $m_{\bullet}$ restricted to the equatorial plane. On the right, a vector plot of $m_{\bullet}$ over the slice coordinate planes stacked along the $z$-axis.}
    \label{fig:mB1}
\end{figure}
It is readily verified that $m$ is $\mathbb{S}^2$-valued and belongs to
$C^{\infty} (B_R)$. Also, a straightforward computation shows that $\operatorname{div} m=0$. However, since $m$ is not tangential to $\partial B_R$, the associated
demagnetizing field $h_d [m]$ does not vanish due to surface magnetic charges.
By Property \ref{P2}, the energy associated with any constant configuration
$\sigma \in L^2 (B_R ; \mathbb{S}^2)$ is given by
\begin{equation}
  \label{reference-state} \mathcal{E}_{B_R} (\sigma) =\mathcal{W}_{B_R}
  (\sigma) = \frac{4}{9} \pi R^3 .
\end{equation}
We use this as our \textit{reference energy} and compare it with the energy of $m$.

To estimate the nonlocal exchange energy $\mathcal{J}_{B_R} (m)$ without
computing the double integral explicitly, we utilize the following estimate
\begin{align}
\mathcal{J}_{B_R}(m)  &= R^6 \iint_{B_1 \times B_1} j(Rx-Ry) \, |m(Rx)-m(Ry)|^2\,dx \, dy \label{eq:rescale} \\ 
&= R^6 \iint_{B_1 \times B_1} j(Rx-Ry) \, |m_{\bullet}(x)-m_{\bullet}(y)|^2\,dx \, dy  \\
&=  R^6  \int_{B_1} \left( \int_{B_1 - x} j(Rh) \,|\Tilde{m}_{\bullet}(x + h)-\Tilde{m}_{\bullet}(x)|^2 \,dh \right) dx \label{eq:transla} \\ 
&\leq  R^6 \int_{B_{2}} j(Rh) \left( \int_{\R^3} |\Tilde{m}_{\bullet}(x + h)-\Tilde{m}_{\bullet}(x)|^2 \,dx \right) dh, \label{ineq:2palla}
\end{align}
where we applied the change of variables $(x,y) \mapsto (Rx, Ry)$ for $(x,y) \in B_R \times B_R$ in \eqref{eq:rescale} and  $y \mapsto x - h$ for fixed $x \in B_1$ in \eqref{eq:transla}, and we used the fact that $j$ is symmetric by Assumption \ref{J1}.
Since from definition \eqref{def:m_bullet 1}--\eqref{def:m_bullet 3} $m_{\bullet}$ is not well-defined outside $B_{\sqrt{2}}$, here we denote by $\Tilde{m}_{\bullet}$ an appropriate extension on $H^1(\R^3; \R^3).$
For the inner integral in \eqref{ineq:2palla}, we apply the Fundamental Theorem of Calculus for Sobolev functions (see, e.g.,~\cite[Prop.~9.3]{brezis2011functional}), yielding 
\begin{align}\label{ineq:Sobolev}
  \int_{\R^3} |\Tilde{m}_{\bullet}(x + h)-\Tilde{m}_{\bullet}(x)|^2 \,dx &\leq |h|^2  \| \nabla \Tilde{m}_{\bullet} \|^{2}_{L^2(\R^3;\R^{3\times 3})} \\ & \leq \Tilde{c} \, |h|^2 \| m_{\bullet}\|^{2}_{H^1(B_1; \R^3)},\label{estim:extension}
\end{align}
for some $\Tilde{c} > 0$, where in \eqref{estim:extension} we used an extension result for Sobolev functions (see, e.g.,~\cite[Theorem 9.7]{brezis2011functional}).
From \eqref{ineq:2palla} and \eqref{estim:extension}, we deduce that 
\begin{align}\label{ineq:nonloc}
\mathcal{J}_{B_R}(m) &\leq \tilde{c} \, \left(R^4 \int_{B_{2}} j(Rh) |Rh|^2 dh \right) \|  m_\bullet \|^{2}_{H^1(B_1;\R^{3})} \notag\\ 
&=  \tilde{c} \, R \left(\int_{B_{2R}} j(h) |h|^2 dh \right) \|  m_\bullet \|^{2}_{H^1(B_1;\R^{3})}. 
\end{align}
The nonlocal exchange energy $\mathcal{J}_{B_R}$ is then estimated as
\begin{equation}\label{finalconto:grad}
\mathcal{J}_{B_R}(m) \leq  c_1 R \left(\int_{B_{2R}} j(h) |h|^2 dh \right), 
\end{equation}
with $c_1 = \tilde{c} \, \| m_{\bullet}\|^{2}_{H^1(B_1; \R^3)}$ --- a simple computation gives
$\| m_{\bullet}\|^{2}_{H^1(B_1; \R^3)}=\left|B_1\right|+\frac{4}{15}(68-15 \pi) \pi=\frac{4}{15}\pi(73-15 \pi)$.
From Lemma \ref{Lemma-Magnetos}, we know that $\mathcal{W}_{B_R}(m)-\mathcal{W}_{B_R}(\sigma)= - c_2\, R^3$ for some $ c_2  > 0$. Hence, comparing the total energy with the reference state \eqref{reference-state}, we get
\begin{equation*}
\mathcal{E}_{B_R}(m)-\mathcal{E}_{B_R}(\sigma) \leq c_1 R \left(\int_{B_{2R}} j(h) |h|^2 dh \right) -c_2 \, R^3.
\end{equation*}
For sufficiently large $R$, the right-hand side is negative due to Lemma
\ref{lemma:j6}, implying that the non-uniform configuration has a lower energy than the constant state. Thus, there exists a critical radius $R^{\ast \ast}$,  which depends on the interaction kernel $j$, beyond which every minimizer is non-constant.
\end{proof}

\subsection*{Author Contributions} \noindent G. Di Fratta, R. Giorgio, and L. Lombardini contributed equally to all the results of this article.

\subsection*{Acknowledgements}
{\sc{G.DiF.}} thanks TU Wien and MedUni Wien for their hospitality. {\sc{G.DiF.}} is a member of Gruppo Nazionale per l'Analisi Matematica, la Probabilità e le loro Applicazioni (GNAMPA) of INdAM. 

\subsection*{Funding}
\noindent{\sc {G.DiF.}} and {\sc R.G.} are partially supported by the Austrian Science Fund (FWF) through the project {\emph{Analysis and Modeling of Magnetic Skyrmions}} (grant 10.55776/P34609). 
\noindent{\sc{G.DiF.}} is partially supported by the Italian Ministry of Education and Research through the PRIN2022 project {\emph{Variational Analysis of Complex Systems in Material Science, Physics and Biology}} No.~2022HKBF5C. 

\subsection*{Data Availability} \noindent Data sharing does not apply to this article as no datasets were generated or analyzed during this study.

\subsection*{Conflict of Interest} The authors declare no conflict of interest.

\bibliographystyle{siam} 
\bibliography{Brown-bibliography.bib}
\end{document}